\newtheorem{theorem}{Theorem}[section]
\newtheorem{proposition}[theorem]{Proposition}
\newtheorem{corollary}[theorem]{Corollary}
\newtheorem{example}[theorem]{Example}
\newtheorem{lemma}[theorem]{Lemma}
\newtheorem{remark}[theorem]{Remark}
\newcommand{\Asc}{{\rm Asc}}
\newcommand{\ch}{{\rm ch}}
\newcommand{\Des}{{\rm Des}}
\newcommand{\sDes}{{\rm sDes}}
\newcommand{\stab}{{\rm Stab}}
\newcommand{\SYT}{{\rm SYT}}
\newcommand{\bB}{{\mathcal B}}
\newcommand{\pP}{{\mathcal P}}
\newcommand{\kk}{{\mathbf k}}
\newcommand{\bx}{{\mathbf x}}
\newcommand{\by}{{\mathbf y}}
\newcommand{\RR}{{\mathbb R}}
\newcommand{\fS}{{\mathfrak S}}
\newcommand{\NN}{{\mathbb N}}
\newcommand{\ZZ}{{\mathbb Z}}
\newcommand{\CC}{{\mathbb C}}
\renewcommand{\to}{\rightarrow}
\newcommand{\sm}{{\smallsetminus}}
\begin{document}
\title[Rees products of posets and equivariant 
gamma-positivity]
{Some applications of Rees products of posets to 
equivariant gamma-positivity}

\author{Christos~A.~Athanasiadis}

\address{Department of Mathematics \\
National and Kapodistrian University of Athens\\
Panepistimioupolis\\
15784 Athens, Greece}
\email{caath@math.uoa.gr}

\date{May 1, 2019}
\thanks{ \textit{Key words and phrases}. 
Rees product, poset homology, group action, 
Schur gamma-positivity, local face module.}

\begin{abstract}
The Rees product of partially ordered sets was 
introduced by Bj\"orner and Welker. Using the theory 
of lexicographic shellability, Linusson, Shareshian 
and Wachs proved formulas, of significance in the theory 
of gamma-positivity, for the dimension of the homology
of the Rees product of a graded poset $P$ with a certain 
$t$-analogue of the chain of the same length as $P$. 
Equivariant generalizations of these formulas are 
proven in this paper, when a group of automorphisms
acts on $P$, and are applied to establish the 
Schur gamma-positivity of certain symmetric functions 
arising in algebraic and geometric combinatorics.
\end{abstract}

\maketitle

\section{Introduction}
\label{sec:intro}

The Rees product $P \ast Q$ of two partially ordered 
sets (posets, for short) was introduced and studied by 
Bj\"orner and Welker~\cite{BW05} as a combinatorial 
analogue of the Rees construction in commutative 
algebra (a precise definition of $P \ast Q$ can be 
found in Section~\ref{sec:pre}). The connection of 
the Rees product of posets to enumerative 
combinatorics was hinted in~\cite[Section~5]{BW05}, 
where it was conjectured that the dimension of the 
homology of the Rees product of the truncated 
Boolean algebra $B_n \sm \{ \varnothing \}$ of rank 
$n-1$ with an $n$-element chain equals the number of 
permutations of $[n] := \{1, 2,\dots,n\}$ without 
fixed points. This statement was generalized in 
several ways in~\cite{SW09}, using enumerative and 
representation theoretic methods, and 
in~\cite{LSW12}, using the theory of lexicographic 
shellability. 

One of the results of~\cite{LSW12} proves formulas
\cite[Corollary~3.8]{LSW12} for the dimension of the 
homology of the Rees product of an EL-shellable poset 
$P$ with a contractible poset which generalizes 
the chain of the same rank as $P$. This paper provides 
an equivariant analogue of this result which seems 
to have enough applications on its own to be of 
independent interest. To state it, let $P$ be a 
finite bounded poset, with minimum element $\hat{0}$ 
and maximum element $\hat{1}$, which is graded of 
rank $n+1$, with rank function $\rho: P \to 
\{0, 1,\dots,n+1\}$ (for basic terminology on posets, 
see \cite[Chapter~3]{StaEC1}). Fix a field $\kk$ and 
let $G$ be a finite group which 
acts on $P$ by order preserving bijections. Then, $G$ 
defines a permutation representation $\alpha_P(S)$ 
over $\kk$ for every $S \subseteq [n]$, induced by 
the action of $G$ on the set of maximal chains of 
the rank-selected subposet 
\begin{equation} \label{eq:P_S}
P_S \ = \ \{ x \in P: \rho(x) \in S \} \, \cup \,
\{\hat{0}, \hat{1} \}
\end{equation}
of $P$. Following Stanley's seminal work~\cite{Sta82}, 
we may consider the virtual $G$-representation
\begin{equation} \label{eq:beta(S)1}
\beta_P(S) \ = \ \sum_{T \subseteq S} (-1)^{|S-T|}
\, \alpha_P(T),
\end{equation}
defined equivalently by the equations
\begin{equation} \label{eq:beta(S)2}
\alpha_P(T) \ = \ \sum_{S \subseteq T} \beta_P(S)
\end{equation}
for $T \subseteq [n]$. The dimensions of $\alpha_P(S)$ 
and $\beta_P(S)$ are important enumerative invariants 
of $P$, known as the entries of its flag $f$-vector and
flag $h$-vector, respectively. When $P$ is Cohen--Macaulay 
over $\kk$, $\beta_P(S)$ is isomorphic to the 
non-virtual $G$-representation $\widetilde{H}_{|S|-1} 
(\bar{P}_S; \kk)$ induced on the top homology group of 
$\bar{P}_S := P_S \sm \{\hat{0}, \hat{1}\}$
\cite[Theorem~1.2]{Sta82}. As discussed and illustrated 
in various situations in~\cite{Sta82}, the decomposition 
of $\beta_P(S)$ as a direct sum of 
irreducible $G$-representations often leads to very 
interesting refinements of the flag $h$-vector of $P$.

As in references~\cite{LSW12, SW09}, we write 
$\beta(\bar{P})$ in place of $\beta_P([n])$ and note,
as just mentioned, that this $G$-representation is 
isomorphic to $\widetilde{H}_{n-1} (\bar{P}; \kk)$ if 
$P$ is Cohen--Macaulay over $\kk$. We 
denote by $T_{t,n}$ the poset whose Hasse diagram 
is a complete $t$-ary tree of height $n$, rooted at 
the minimum element. We denote by $P^-$, $P_{-}$ 
and $\bar{P}$ the poset obtained from $P$ by 
removing its maximum element, or minimum element, 
or both, respectively, and recall 
from~\cite{SW09} (see also Section~\ref{sec:pre}) 
that the action of $G$ on $P$ induces actions on
$P^- \ast T_{t,n}$ and $\bar{P} \ast T_{t,n-1}$ 
as well. We also write $[a, b] := \{a, 
a+1,\dots,b\}$ for integers $a \le b$ and denote 
by $\stab (\Theta)$ the set of all subsets, called 
stable, of $\Theta \subseteq \ZZ$ which do not 
contain two consecutive integers. The following 
result reduces to~\cite[Corollary~3.8]{LSW12}, 
proven in~\cite{LSW12} under additional 
shellability assumptions on $P$, in the special 
case of a trivial action.

\begin{theorem} \label{thm:main}
Let $G$ be a finite group acting on a finite bounded 
graded poset $P$ of rank $n+1$ by order preserving 
bijections. Then, 
  \begin{eqnarray} \label{eq:main1}
    \beta ((P^- \ast T_{t,n})_{-}) 
  & \cong_G & 
    \sum_{S \in \stab ([n-1])}
    \beta_P ([n] \sm S) \ t^{|S|} (1+t)^{n-2|S|} 
    \ \ + \\
  & & \nonumber \\
  & & \sum_{S \in \stab ([n-2])} \beta_P 
    ([n-1] \sm S) \ t^{|S|+1} (1+t)^{n-1-2|S|} 
    \nonumber
  \end{eqnarray}

\noindent
and 
  \begin{eqnarray} \label{eq:main2}
    \beta (\bar{P} \ast T_{t,n-1}) 
  & \cong_G & 
    \sum_{S \in \stab ([2, n-2])} \beta_P 
    ([n-1] \sm S) \ t^{|S|+1} (1+t)^{n-2-2|S|} 
    \ \ + \\
  & & \nonumber \\
  & & \sum_{S \in \stab ([2, n-1])}
    \beta_P ([n] \sm S) \ t^{|S|} (1+t)^{n-1-2|S|} 
    \nonumber
  \end{eqnarray}

\noindent
for every positive integer $t$, where $\cong_G$ 
stands for isomorphism of $G$-representations.
If $P$ is Cohen--Macaulay over $\kk$, then the 
left-hand sides of (\ref{eq:main1}) and (\ref{eq:main2}) 
may be replaced by the $G$-representations 
$\widetilde{H}_{n-1} ((P^- \ast T_{t,n})_{-}; \kk)$ 
and $\widetilde{H}_{n-1} (\bar{P} \ast T_{t,n-1}; 
\kk)$, respectively, and all representations which 
appear in these formulas are non-virtual.
\end{theorem}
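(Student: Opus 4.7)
The approach is to bootstrap from the non-equivariant identities of \cite[Corollary~3.8]{LSW12} by promoting them to the level of $G$-representations through the rank-selected permutation characters $\alpha_P(T)$ and Möbius inversion on the Boolean lattice. This circumvents the need for an EL-shellability hypothesis on $P$: shellability was used in \cite{LSW12} only to exhibit an explicit basis of top homology, but the $\alpha/\beta$ formalism of Stanley lets one track representations without such a basis.

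First, I would describe the maximal chains of the rank-selected subposets of $(P^- \ast T_{t,n})_-$ and $\bar{P} \ast T_{t,n-1}$ directly from the definition of the Rees product. Since $G$ acts trivially on the tree $T_{t,n}$ and only on the first coordinate of a pair $(x,y)$ in the Rees product, every maximal chain decomposes canonically as the datum of a maximal chain in a rank-selected subposet of $P$ together with a compatible labeling by elements of $T_{t,n}$, with $G$ acting trivially on the labelings. This yields, for every $S$,
\begin{equation*}
\alpha_{(P^- \ast T_{t,n})_-}(S) \ \cong_G \ \bigoplus_T c_{S,T}(t) \, \alpha_P(T),
\end{equation*}
with nonnegative integer polynomials $c_{S,T}(t)$ depending only on $S$, $T$ and $t$, and an analogous expression for $\bar{P} \ast T_{t,n-1}$.

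Second, I would substitute these decompositions into \eqref{eq:beta(S)1} and apply Möbius inversion on the Boolean lattice to obtain $\beta((P^- \ast T_{t,n})_-)$ and $\beta(\bar{P} \ast T_{t,n-1})$ as $\ZZ[t]$-linear combinations of the $\beta_P(T)$. At this point the required identity has become entirely numerical: for each fixed $T$, one must verify that the coefficient of $\beta_P(T)$ agrees with the stable-set expansion on the right-hand side of \eqref{eq:main1} or \eqref{eq:main2}. This is precisely the non-equivariant content of \cite[Corollary~3.8]{LSW12} applied to each rank-selected summand of the flag $h$-vector, and can be invoked directly. Finally, when $P$ is Cohen--Macaulay over $\kk$, the theorem of Björner--Welker \cite{BW05} that Rees products preserve Cohen--Macaulayness, together with \cite[Theorem~1.2]{Sta82}, identifies the left-hand sides with top reduced homology representations and certifies non-virtuality of every summand on the right.

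The main obstacle is the bookkeeping in the passage from $\alpha$ to $\beta$: the two summations in each of \eqref{eq:main1} and \eqref{eq:main2} and the particular index sets $\stab([n-1])$, $\stab([n-2])$, $\stab([2,n-2])$, $\stab([2,n-1])$ all reflect boundary effects caused by the removal of the minimum, or minimum and maximum, of the Rees product, and one must keep careful track of which rank levels can or cannot appear in a maximal chain in each case. Once this combinatorial input is isolated, the equivariant upgrade is automatic, because the $G$-action does not interact with the tree factor and thus commutes with every manipulation done on the $t$-weights.
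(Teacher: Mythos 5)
Your proposal follows the paper's own proof almost step for step, so let me first record the agreement and then the one place where it breaks down. Your first step is the paper's Lemma~\ref{lem:alpha}: since the rank function of a Rees product is inherited from the first factor, a maximal chain of the rank-selected subposet at rank set $S$ projects to a maximal chain of $P_S$ itself, decorated by a multichain in the tree on which $G$ acts trivially. Note that this makes your decomposition \emph{diagonal}: one gets $\alpha_Q(S) \cong_G \varphi_t(S)\,\alpha_P(S)$ with an explicit scalar multiplicity $\varphi_t(S) = [s_1+1]_t\,[s_2-s_1+1]_t \cdots [s_k-s_{k-1}+1]_t$ (and $\psi_t(S)$, with first factor $[s_1]_t$, for $\bar{P} \ast T_{t,n-1}$), not a genuine sum over $T \ne S$ as you wrote; harmless, but you never compute these multiplicities, and they are needed below. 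Your second step (substitute into the definition of $\beta$ and invert on the Boolean lattice) is also exactly the paper's, as is your treatment of the Cohen--Macaulay statement via Bj\"orner--Welker and Stanley's Theorem~1.2.

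The gap is in how you dispose of the resulting numerical identity. After inversion you must show, for each fixed $T \subseteq [n]$, that $\sum_{T \subseteq U \subseteq [n]} (-1)^{n-|U|}\varphi_t(U)$ vanishes unless $[n] \sm T$ is stable and otherwise equals $t^r(1+t)^{n-2r}$ or $t^r(1+t)^{n+1-2r}$ with $r = n-|T|$, according to whether $n \in T$ (with the analogous statement for $\psi_t$, including vanishing when $1 \notin T$). You assert this ``is precisely the non-equivariant content of [LSW12, Corollary~3.8] \dots and can be invoked directly.'' It cannot: that corollary is an \emph{aggregated} identity, $\sum_T c_T(t)\dim\beta_P(T) = \mathrm{RHS}$, valid for each individual EL-shellable poset $P$, and individual coefficients $c_T(t)$ cannot be read off from it unless you also prove that the flag $h$-vectors of EL-shellable bounded graded posets of rank $n+1$ span the full $2^n$-dimensional space --- a fact you neither state nor establish. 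The repair is either (a) to exhibit such a spanning family (for instance layered posets with $a_i$ elements at rank $i$ and all relations between consecutive ranks, whose flag $f$-vectors $\alpha_P(T) = \prod_{i \in T} a_i$ span as the $a_i$ vary, together with a shellability verification for them), or (b) to do what the paper does in its Lemma~\ref{lem:phipsi}: evaluate the alternating sums directly, factoring them over the gaps of $T$ into local sums $\chi_t(m)$ and $\omega_t(m)$ and checking that $\chi_t(m) = 0$ for $m \ge 3$ and $\omega_t(m) = 0$ for $m \ge 2$, which is where the four index sets $\stab([n-1])$, $\stab([n-2])$, $\stab([2,n-2])$, $\stab([2,n-1])$ actually come from. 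The ``bookkeeping'' you defer in your closing paragraph is exactly this lemma, i.e.\ the entire mathematical content of the theorem beyond the (easy) equivariant reduction, and as written your proposal does not contain it.
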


Several applications of~\cite[Corollary~3.8]{LSW12} 
to $\gamma$-positivity appear in \cite{LSW12} 
\cite[Section~3]{Ath14} and are summarized in 
\cite[Section~2.4]{Ath17}. Theorem~\ref{thm:main} 
has non-trivial applications to Schur 
$\gamma$-positivity, which we now briefly discuss. 
A polynomial in $t$ with real coefficients is said 
to be \emph{$\gamma$-positive} if for some $m \in 
\NN$, it can be written as a nonnegative linear 
combination of the binomials $t^i (1+t)^{m-2i}$ for 
$0 \le i \le m/2$. Clearly, all such polynomials 
have symmetric and unimodal coefficients. Two 
symmetric function identities due to Gessel 
(unpublished), stated without proof 
in~\cite[Section~4]{LSW12} \cite[Theorem~7.3]{SW10}, 
can be written in the form 
\begin{equation} \label{eq:Ge2}
{\displaystyle 
\frac{1-t} {E(\bx; tz) - tE(\bx; z)} \ = \ 
1 \, + \, \sum_{n \ge 2} z^n 
\sum_{k=0}^{\lfloor (n-2)/2 \rfloor} 
\xi_{n,k}(\bx) \, t^{k+1} (1 + t)^{n-2k-2}}
\end{equation}
and
\begin{equation} \label{eq:Ge1}
{\displaystyle 
\frac{(1-t) E(\bx; tz)} {E(\bx; tz) - tE(\bx; z)} 
\ = \ 1 \, + \, \sum_{n \ge 1} z^n 
\sum_{k = 0}^{\lfloor (n-1)/2 \rfloor} 
\gamma_{n,k}(\bx) \, t^{k+1} (1 + t)^{n-1-2k}},
\end{equation}

\smallskip
\noindent
where $E(\bx; z) = \sum_{n \ge 0} e_n(\bx) z^n$ 
is the generating function for the elementary 
symmetric functions in $\bx = (x_1, x_2,\dots)$ 
and the $\xi_{n,k}(\bx)$ and $\gamma_{n,k}(\bx)$ 
are Schur-positive symmetric functions, whose 
coefficients in the Schur basis can be explicitly
described (see Corollary~\ref{cor:Gessel}). The 
coefficients of $z^n$ in the right-hand sides of 
Equations~(\ref{eq:Ge2}) and~(\ref{eq:Ge1}) are 
Schur $\gamma$-positive symmetric functions, in 
the sense that their coefficients in the Schur 
basis are $\gamma$-positive polynomials in $t$
with the same center of symmetry. Their Schur 
$\gamma$-positivity refines the $\gamma$-positivity
of derangement and Eulerian polynomials, respectively;
see \cite[Section~2.5]{Ath17} for more information.

We will show (see Section~\ref{sec:app}) that 
Gessel's identities 
can in fact be derived from the special case of 
Theorem~\ref{thm:main} in which $P^-$ is the 
Boolean algebra $B_n$, endowed with the natural 
symmetric group action. Moreover, applying the 
theorem when $P^-$ is a natural signed analogue 
of $B_n$, endowed with a hyperoctahedral group 
action, we obtain new identities of the form 
\begin{equation} \label{eq:Ath2+}
{\displaystyle \frac{E(\bx; tz) - t E(\bx; z)} 
{E(\bx; tz) E(\by; tz) - tE(\bx; z) E(\by; z)} 
\ = \ \sum_{n \ge 0} z^n \,
\sum_{k=0}^{\lfloor n/2 \rfloor} 
\xi^+_{n,k}(\bx, \by) \, t^k (1 + t)^{n-2k}},
\end{equation}
\begin{equation} \label{eq:Ath2-}
{\displaystyle \frac{E(\bx; z) - E(\bx; tz)} 
{E(\bx; tz) E(\by; tz) - tE(\bx; z) E(\by; z)} 
\ = \ \sum_{n \ge 1} z^n
\sum_{k=0}^{\lfloor (n-1)/2 \rfloor} 
\xi^{-}_{n,k}(\bx, \by) \, t^k (1 + t)^{n-1-2k}}, 
\end{equation}
\begin{equation} \label{eq:Ath1+}
{\displaystyle \frac{E(\bx; z) E(\bx; tz) 
\left( E(\by; tz) - t E(\by; z) \right)} 
{E(\bx; tz) E(\by; tz) - tE(\bx; z) E(\by; z)} 
\ = \ \sum_{n \ge 0} z^n 
\sum_{k=0}^{\lfloor n/2 \rfloor} \gamma^{+}_{n,k}
(\bx, \by) \, t^k (1 + t)^{n-2k}} 
\end{equation}
and
\begin{equation} \label{eq:Ath1-}
{\displaystyle \frac{t E(\bx; z) E(\bx; tz) 
\left( E(\by; z) - E(\by; tz) \right)} 
{E(\bx; tz) E(\by; tz) - tE(\bx; z) E(\by; z)} \ = \
\sum_{n \ge 1} z^n \sum_{k=1}^{\lfloor (n+1)/2 \rfloor} 
\gamma^{-}_{n,k}(\bx, \by) \, t^k (1 + t)^{n+1-2k}}, 
\end{equation}

\smallskip
\noindent
where the $\xi^{\pm}_{n,k}(\bx, \by)$ and 
$\gamma^{\pm}_{n,k}(\bx, \by)$ are Schur-positive 
symmetric functions in the sets of variables $\bx = 
(x_1, x_2,\dots)$ and $\by = (y_1, y_2,\dots)$ 
separately. Their Schur positivity refines 
the $\gamma$-positivity of type $B$ analogues or 
variants of derangement and Eulerian polynomials; 
this is explained and 
generalized in the sequel \cite{Ath1x} to this 
paper. Note that for $\bx = 0$, the left-hand side 
of Equation~(\ref{eq:Ath2+}) specializes to that 
of~(\ref{eq:Ge2}) (with $\bx$ replaced by $\by$) 
and the sum of the left-hand sides of 
Equations~(\ref{eq:Ath1+}) and ~(\ref{eq:Ath1-})
specializes to that of~(\ref{eq:Ge1}) (again with 
$\bx$ replaced by $\by$). 

Various combinatorial and algebraic-geometric 
interpretations of the left-hand sides of 
Equations~(\ref{eq:Ge2}) and~(\ref{eq:Ge1}) are 
discussed in \cite[Section~7]{SW10} 
\cite[Section~4]{LSW12} \cite{SW16b}. For instance,
by \cite[Proposition~4.20]{Sta92}, the coefficient
of $z^n$ in the left-hand side of~(\ref{eq:Ge2}) 
can be interpreted as the Frobenius characteristic 
of the symmetric group representation on the local 
face module of the barycentric subdivision of the 
$(n-1)$-dimensional simplex, twisted by the sign 
representation. Thus, the Schur $\gamma$-positivity
of this coefficient, manifested by 
Equation~(\ref{eq:Ge2}), is an instance of the 
local equivariant Gal phenomenon, as discussed in 
\cite[Section~5.2]{Ath17}. Section~\ref{sec:face} 
shows that another instance of this phenomenon 
follows from the specialization $\bx = \by$ of 
Equation~(\ref{eq:Ath2+}). Similarly, setting 
$\by=0$ to (\ref{eq:Ath1+}) yields another 
identity, recently proven by Shareshian and Wachs 
(see Proposition~3.3 and Theorem~3.4 in~\cite{SW17}) 
in order to establish the equivariant Gal phenomenon 
for the symmetric group action on the $n$-dimensional 
stellohedron and Section~\ref{sec:toric} combines 
Equation~(\ref{eq:Ge2}) with~(\ref{eq:Ath1+}) to 
establish the same phenomenon for the hyperoctahedral
group action on its associated Coxeter complex. Further 
applications of Theorem~\ref{thm:main} are given 
in~\cite{Ath1x}. It would be interesting to find 
direct combinatorial proofs of 
Equations~(\ref{eq:Ath2+}) -- (\ref{eq:Ath1-}) and 
to generalize other known interpretations of the 
left-hand sides of Equations~(\ref{eq:Ge2}) 
and~(\ref{eq:Ge1}) to those of~(\ref{eq:Ath2+}) -- 
(\ref{eq:Ath1-}).

\medskip
\noindent
\textbf{Outline.} The proof of Theorem~\ref{thm:main} 
is given in Section~\ref{sec:proof}, after the 
relevant background and definitions are explained in 
Section~\ref{sec:pre}. This proof is fairly 
elementary and different from that 
of~\cite[Corollary~3.8]{LSW12}. Section~\ref{sec:app} 
derives Equations~(\ref{eq:Ge2}) -- (\ref{eq:Ath1-}) 
from Theorem~\ref{thm:main} and provides explicit 
combinatorial interpretations, in terms of standard 
Young (bi)tableaux and their descents, for the 
Schur-positive symmetric functions which appear 
there. Sections~\ref{sec:face} and~\ref{sec:toric} 
provide the promised applications of 
Equations~(\ref{eq:Ath2+}) and~(\ref{eq:Ath1+}) to 
the equivariant $\gamma$-positivity of the symmetric 
group representation on the local face module of 
a certain triangulation of the simplex and the 
hyperoctahedral group representation on the cohomology
of the projective toric variety associated to the 
Coxeter complex of type $B$.

\section{Preliminaries}
\label{sec:pre}

This section briefly records definitions and 
background on posets, group representations and 
(quasi)symmetric functions. For basic notions and 
more information on these topics, the reader is 
referred to the sources
\cite{Sa01} \cite{Sta82} \cite[Chapter~3]{StaEC1} 
\cite[Chapter~7]{StaEC2} \cite{Wa07}. The 
symmetric group of permutations of the set $[n]$ 
is denoted by $\fS_n$ and the cardinality of a 
finite set $S$ by $|S|$.

\medskip
\noindent
\textbf{Group actions on posets and Rees products.}
All groups and posets considered here are assumed 
to be finite. Homological notions for posets always 
refer to those of their order complex; 
see~\cite[Lecture~1]{Wa07}. A poset $P$ has the 
structure of a \emph{$G$-poset} 
if the group $G$ acts on $P$ by order preserving 
bijections. Then, $G$ induces a representation on 
every reduced homology group $\widetilde{H}_i (P; 
\kk)$, for every field $\kk$. 

Suppose that $P$ is a $G$-poset with minimum 
element $\hat{0}$ and maximum element $\hat{1}$. 
Sundaram~\cite{Su94} (see also 
\cite[Theorem~4.4.1]{Wa07}) established the 
isomorphism of $G$-representations
\begin{equation} \label{eq:sun}
  \bigoplus_{k \ge 0} \ (-1)^k \bigoplus_{x \in P/G} 
   \widetilde{H}_{k-2} ((\hat{0}, x); \kk)
   \uparrow^G_{G_x} \ \, \cong_G \ 0.
\end{equation}

\noindent
Here $P/G$ stands for a complete set of $G$-orbit 
representatives, $(\hat{0}, x)$ denotes the open 
interval of elements of $P$ lying strictly between 
$\hat{0}$ and $x$, $G_x$ is the stabilizer of 
$x$ and $\uparrow$ denotes induction. Moreover,
$\widetilde{H}_{k-2} ((\hat{0}, x); \kk)$ is 
understood to be the trivial representation 
$1_{G_x}$ if $x = \hat{0}$ and $k=0$, or $x$ 
covers $\hat{0}$ and $k = 1$. 

The \emph{Lefschetz character} of a finite 
$G$-poset $P$ (over the field $\kk$) is defined 
as the virtual $G$-representation

\[ L(P; G) \ := \ \bigoplus_{k \ge 0} \, (-1)^k \, 
                  \widetilde{H}_k (P; \kk). \]
Note that $L(P; G) = (-1)^r \widetilde{H}_r 
(P; \kk)$, if $P$ is Cohen--Macaulay over $\kk$ 
of rank $r$. 

Given finite graded posets $P$ and $Q$ with rank 
functions $\rho_P$ and $\rho_Q$, respectively, 
their \emph{Rees product} is defined
in~\cite{BW05} as $P \ast Q = \{ (p, q) \in P 
\times Q: \rho_P (p) \ge \rho_Q (q) \}$, with 
partial order defined by setting $(p_1, q_1) 
\preceq (p_2, q_2)$ if all of the following 
conditions are satisfied:

\begin{itemize}
\itemsep=0pt
\item[$\bullet$]
$p_1 \preceq p_2$ holds in $P$,

\item[$\bullet$]
$q_1 \preceq q_2$ holds in $Q$ and

\item[$\bullet$]
$\rho_P (p_2) - \rho_P (p_1) \ge \rho_Q 
(q_2) - \rho_Q (q_1)$.
\end{itemize}

\noindent
Equivalently, $(p_1, q_1)$ is covered by $(p_2, q_2)$ 
in $P \ast Q$ if and only if (a) $p_1$ is covered by 
$p_2$ in $P$; and (b) either $q_1 = q_2$, or $q_1$ is 
covered by $q_2$ in $Q$. We note that the Rees product
$P \ast Q$ is graded with rank function given by 
$\rho(p,q) = \rho_P(p)$ for $(p,q) \in P \ast Q$, and 
that if $P$ is a $G$-poset, then so is $P \ast Q$ 
with the $G$-action defined by setting $g \cdot (p, 
q) = (g \cdot p, q)$ for $g \in G$ and $(p, q) \in 
P \ast Q$.

A bounded graded $G$-poset $P$, with maximum element
$\hat{1}$, is said to be \emph{$G$-uniform} 
\cite[Section~3]{SW09} if the following hold:

\begin{itemize}
\itemsep=0pt
\item[$\bullet$]
the intervals $[x, \hat{1}]$ and $[y, \hat{1}]$ in 
$P$ are isomorphic for all $x, y \in P$ of the same 
rank,

\item[$\bullet$]
the stabilizers $G_x$ and $G_y$ are isomorphic for 
all $x, y \in P$ of the same rank, and

\item[$\bullet$]
there is an isomorphism between $[x, \hat{1}]$ and 
$[y, \hat{1}]$ that intertwines the actions of $G_x$ 
and $G_y$, for all $x, y \in P$ of the same rank.
\end{itemize}

Given a sequence of groups $G = (G_0, 
G_1,\dots,G_n)$, a sequence of posets $(P_0, 
P_1,\dots,P_n)$ is said to be \emph{$G$-uniform} 
\cite[Section~3]{SW09} if 

\begin{itemize}
\itemsep=0pt
\item[$\bullet$]
$P_k$ is $G_k$-uniform of rank $k$ for all $k$,

\item[$\bullet$]
$G_k$ is isomorphic to the stabilizer $(G_n)_x$
for every $x \in P_n$ of rank $n-k$ and all $k$, 
and

\item[$\bullet$]
there is an isomorphism between $P_k$ and the 
interval $[x, \hat{1}]$ in $P_n$ that intertwines 
the actions of $G_k$ and $(G_n)_x$ for every $x 
\in P_n$ of rank $n-k$ and all $k$.
\end{itemize}

\noindent
Under these assumptions, Shareshian and Wachs 
\cite[Proposition~3.7]{SW09} established the 
isomorphism of $G$-representations
\begin{equation} \label{eq:uniform}
  1_{G_n} \oplus \, \bigoplus_{k=0}^n \, 
  W_k(P_n;G_n) \, [k+1]_t \, L((P_{n-k} \ast 
  T_{t,n-k})_{-}; G_{n-k}) \uparrow^{G_n}_{G_{n-k}} 
  \ \, \cong_G \ 0
\end{equation}

\noindent 
for every positive integer $t$, where $W_k(P_n;G_n)$ 
is the number of $G_n$-orbits of elements of $P_n$ 
of rank $k$ and $[k+1]_t := 1 + t + \cdots + t^k$.

\begin{example} \label{ex:uniform} \rm
The Boolean algebra $B_n$ consists of all subsets 
of $[n]$, partially ordered by inclusion. When endowed 
with the standard $\fS_n$-action, it becomes a 
prototypical example of an $\fS_n$-uniform poset. 
Every element $x \in B_n$ of rank $k$ is a set of 
cardinality $k$; its stabilizer $(\fS_n)_x = \{w \in 
\fS_n: w x = x\}$ is isomorphic to the Young 
subgroup $\fS_k \times \fS_{n-k}$ of $\fS_n$, which 
can be defined as the stalilizer of $[k]$. The sequence 
$(B_0, B_1,\dots,B_n)$ can easily be verified to be 
$(G_0, G_1,\dots,G_n)$-uniform for $G_k := \fS_k 
\times \fS_{n-k}$. 
\qed
\end{example}

\noindent
\textbf{Permutations, Young tableaux and symmetric
functions.} Our notation concerning these topics 
follows mostly that of \cite{Sa01} 
\cite[Chapter~1]{StaEC1} \cite[Chapter~7]{StaEC2}.
In particular, the set of standard Young tableaux 
of shape $\lambda$ is denoted by $\SYT(\lambda)$, 
the descent set $\{ i \in [n-1]: w(i) > w(i+1) \}$ 
of a permutation $w \in \fS_n$ is denoted by $\Des
(w)$ and that of a tableau $Q \in 
\SYT(\lambda)$, consisting of those entries $i$ for 
which $i+1$ appears in $Q$ in a lower row than $i$,
is denoted by $\Des(Q)$. We recall that the 
Robinson--Schensted correspondence is a 
bijection from the 
symmetric group $\fS_n$ to the set of pairs 
$(\pP, Q)$ of standard Young tableaux of the same 
shape and size $n$. This correspondence has the
property \cite[Lemma~7.23.1]{StaEC2} that $\Des(w) 
= \Des(Q(w))$ and $\Des(w^{-1}) = \Des(\pP(w))$, 
where $(\pP(w), Q(w))$ is the pair of tableaux 
associated to $w \in \fS_n$.

We will consider symmetric functions in the 
indeterminates $\bx = (x_1, x_2,\dots)$ over the 
complex field $\CC$. We denote by $E(\bx; z) := 
\sum_{n \ge 0} e_n(\bx) z^n$ and $H(\bx; z) := 
\sum_{n \ge 0} h_n(\bx) z^n$ the generating 
functions for the elementary and complete 
homogeneous symmetric functions, respectively, 
and recall the identity $E(\bx; z) H(\bx; -z) = 1$. 
The (Frobenius) characteristic map, a $\CC$-linear 
isomorphism of fundamental importance from the 
space of virtual $\fS_n$-representations to that 
of homogeneous symmetric functions of degree $n$,
will be denoted by $\ch$. This map sends the 
irreducible $\fS_n$-representation corresponding
to $\lambda \vdash n$ to the Schur function 
$s_\lambda(\bx)$ associated to $\lambda$ and thus
it sends non-virtual $\fS_n$-representations to 
Schur-positive symmetric functions.
The \emph{fundamental quasisymmetric function} 
associated to $S \subseteq [n-1]$ is defined as
  \begin{equation} \label{eq:defF(x)}
    F_{n, S} (\bx) \ =
    \sum_{\substack{1 \le i_1 \le i_2 \le \cdots 
    \le i_n \\ j \in S \,\Rightarrow\, 
    i_j < i_{j+1}}}
    x_{i_1} x_{i_2} \cdots x_{i_n}.
  \end{equation}
The following well known expansion 
\cite[Theorem~7.19.7]{StaEC2}
  \begin{equation} \label{eq:sFexpan}
    s_\lambda(\bx) \ = \ \sum_{Q \in \SYT (\lambda)} 
    F_{n, \Des(Q)} (\bx)
  \end{equation}
will be used in Section~\ref{sec:app}.

For the applications given there,
we need the analogues of these concepts in the 
representation theory of the hyperoctahedral group 
of signed permutations of the set $[n]$, denoted 
here by $\bB_n$. We will keep this discussion rather
brief and refer to \cite[Section~2]{AAER17} for 
more information.

A \emph{bipartition} of a positive integer $n$, 
written $(\lambda, \mu) \vdash n$, is any pair 
$(\lambda, \mu)$ of integer partitions of total 
sum $n$. A \emph{standard Young bitableau} of 
shape $(\lambda, \mu) \vdash n$ is any pair $Q 
= (Q^+, Q^{-})$ of Young tableaux such that 
$Q^+$ has shape $\lambda$, $Q^{-}$ has shape
$\mu$ and every element of $[n]$ appears
exactly once as an entry of $Q^+$ or $Q^{-}$.
The tableaux $Q^+$ and $Q^{-}$ are called the 
\emph{parts} of $Q$ and the number $n$ is its 
\emph{size}. The Robinson--Schensted 
correspondence of type $B$, as described 
in~\cite[Section~6]{Sta82} (see 
also~\cite[Section~5]{AAER17}), is a 
bijection from the group $\bB_n$ to the set 
of pairs $(\pP, Q)$ of standard Young bitableaux 
of the same shape and size $n$.
 
The (Frobenius) characteristic map for the 
hyperoctahedral group, denoted by $\ch_\bB$, 
is a $\CC$-linear isomorphism from the space of 
virtual $\bB_n$-representations to that of 
homogeneous symmetric functions of degree $n$ in 
the sets of indeterminates $\bx = (x_1, x_2,\dots)$ 
and $\by = (y_1, y_2,\dots)$ separately; see, for 
instance, \cite[Section~5]{Ste92}. The map $\ch_\bB$
sends the irreducible $\bB_n$-representation 
corresponding to $(\lambda, \mu) \vdash n$ to the 
function $s_\lambda(\bx) s_\mu(\by)$ and thus
it sends non-virtual $\bB_n$-representations to 
Schur-positive functions, meaning nonnegative 
integer linear combinations of the functions 
$s_\lambda(\bx) s_\mu(\by)$. The following basic 
properties of $\ch_\bB$ will be useful in 
Section~\ref{sec:app}:

\smallskip
\begin{itemize}
\itemsep=1pt
\item[$\bullet$]
$\ch_\bB(1_{\bB_n}) = h_n(\bx)$, where $1_{\bB_n}$
is the trivial $\bB_n$-representation,

\item[$\bullet$]
$\ch_\bB (\sigma \otimes \tau 
\uparrow^{\bB_n}_{\bB_k \times \bB_{n-k}}) = 
\ch_\bB(\sigma) \cdot \ch_\bB (\tau)$ for all 
representations $\sigma$ and $\tau$ of $\bB_k$ 
and $\bB_{n-k}$, respectively, where $\bB_k 
\times \bB_{n-k}$ is the Young subgroup of $\bB_n$
consisting of all signed permutations which preserve
the set $\{ \pm 1, \pm 2,\dots,\pm k\}$, 

\item[$\bullet$]
$\ch_\bB (\uparrow^{\bB_n}_{\fS_n} \rho) = \ch
(\rho) (\bx, \by)$ for every $\fS_n$-representation 
$\rho$, where $\fS_n \subset \bB_n$ is the standard
embedding.
\end{itemize}

\smallskip
We denote by $E(\bx, \by; z) := \sum_{n \ge 0} e_n
(\bx, \by) z^n$ and $H(\bx, \by; z) := \sum_{n \ge 0} 
h_n(\bx, \by) z^n$ the generating function for the 
elementary and complete homogeneous, respectively,
symmetric functions in the variables $(\bx, \by) = 
(x_1, x_2,\dots,y_1, y_2,\dots)$ and note that 
$E(\bx, \by; z) = E(\bx; z) E(\by; z)$, since $e_n
(\bx, \by) = \sum_{k=0}^n e_k(\bx) e_{n-k}(\by)$, 
and similarly that $H(\bx, \by; z) = H(\bx; z) 
H(\by; z)$.  

The \emph{signed descent set} 
\cite[Section~2]{AAER17} \cite{Poi98} of $w \in 
\bB_n$, denoted $\sDes(w)$, records the positions 
of the increasing (in absolute value) runs of 
constant sign in the sequence $(w(1), 
w(2),\dots,w(n))$. Formally, we may define 
$\sDes(w)$ as the pair $(\Des(w), \varepsilon)$, 
where $\varepsilon = (\varepsilon_1, 
\varepsilon_2,\dots,\varepsilon_n) \in \{-, +\}^n$ 
is the sign vector with $i$th coordinate equal to 
the sign of $w(i)$ and $\Des(w)$ consists of the 
indices $i \in [n-1]$ for which either $\varepsilon_i 
= +$ and $\varepsilon_{i+1} = -$, or else  
$\varepsilon_i = \varepsilon_{i+1}$ and $|w(i)| > 
|w(i+1)|$ (this definition is slightly different 
from, but equivalent to, the ones given in 
\cite{AAER17,Poi98}). The fundamental 
quasisymmetric function associated to $w$,
introduced by Poirier~\cite{Poi98} in a more 
general setting, is defined as 
 \begin{equation} \label{eq:defF(xy)}
    F_{\sDes(w)} (\bx, \by) \ =
    \sum_{\substack{i_1 \le i_2 \le \cdots 
    \le i_n \\ j \in \Des(w) \, 
    \Rightarrow \, i_j < i_{j+1}}}
    z_{i_1} z_{i_2} \cdots z_{i_n},
  \end{equation}
where $z_{i_j} = x_{i_j}$ if $\varepsilon_j = +$, and 
$z_{i_j} = y_{i_j}$ if $\varepsilon_j = -$. Given a 
standard Young bitableau $Q$ of size $n$, one defines 
the signed descent set $\sDes(Q)$ as the pair 
$(\Des(Q), \varepsilon)$, where $\varepsilon = 
(\varepsilon_1, \varepsilon_2,\dots,\varepsilon_n) 
\in \{-, +\}^n$ is the sign vector with $i$th 
coordinate equal to the sign of the part of $Q$ in 
which $i$ appears and $\Des(Q)$ is the set of indices 
$i \in [n-1]$ for which either $\varepsilon_i = +$ 
and $\varepsilon_{i+1} = -$, or else  $\varepsilon_i 
= \varepsilon_{i+1}$ and $i+1$ appears in $Q$ in a 
lower row than $i$. The function $F_{s\Des(Q)} (\bx, 
\by)$ is then defined by the right-hand side of 
Equation~(\ref{eq:defF(xy)}), with $w$ replaced by
$Q$; see~\cite[Section~2]{AAER17}. The analogue 
  \begin{equation} \label{eq:sxyFexpan}
    s_\lambda(\bx) s_\mu(\by) \ = \sum_{Q \in \SYT 
    (\lambda,\mu)} F_{s\Des(Q)} (\bx, \by)
  \end{equation}
of the expansion~(\ref{eq:sFexpan}) holds 
(\cite[Proposition~4.2]{AAER17}) and the 
Robinson--Schensted correspondence of type $B$ has 
the properties that $\sDes(w) = \sDes(Q^B(w))$ and 
$\sDes(w^{-1}) = \sDes(\pP^B(w))$, where $(\pP^B(w), 
Q^B(w))$ is the pair of bitableaux associated to $w 
\in \bB_n$; see~\cite[Proposition~5.1]{AAER17}.

\section{Proof of Theorem~\ref{thm:main}}
\label{sec:proof}

This section proves Theorem~\ref{thm:main} using 
only the definition of Rees product and the defining 
equation~(\ref{eq:beta(S)1}) of the representations 
$\beta_P(S)$. For $S = \{s_1, s_2,\dots,s_k\} 
\subseteq [n]$ with $s_1 < s_2 < \cdots < s_k$ we 
set 
  \begin{eqnarray*}
    \varphi_t(S) & := & [s_1 + 1]_t \, [s_2 - s_1 + 1]_t 
                        \cdots [s_k - s_{k-1} + 1]_t \\
    \psi_t(S) & := & [s_1]_t \, [s_2 - s_1 + 1]_t \cdots 
                        [s_k - s_{k-1} + 1]_t 
  \end{eqnarray*}

\medskip
\noindent
where, as mentioned already, $[m]_t = 1 + t + \cdots + 
t^{m-1}$ for positive integers $m$.

\begin{lemma} \label{lem:alpha}
Let $G$ be a finite group, $P$ be a finite bounded 
graded $G$-poset of rank $n+1$, as in 
Theorem~\ref{thm:main}, and $Q, R$ be the posets 
defined by $\bar{Q} = (P^- \ast T_{t,n})_{-}$ and 
$\bar{R} = \bar{P} \ast T_{t,n-1}$. Then, 
  \begin{eqnarray*}
    \alpha_Q (S) & \cong_G & \varphi_t(S) \, 
    \alpha_P (S) \\
    \alpha_R (S) & \cong_G & \psi_t(S) \, 
    \alpha_P (S)
  \end{eqnarray*}

\noindent
for all positive integers $t$ and $S \subseteq [n]$.
\end{lemma}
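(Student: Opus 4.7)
The plan is to identify the maximal chains of $Q_S$ and of $R_S$ with Cartesian products of $G$-sets in which $G$ acts only on the first factor, and then to enumerate the second factor via a geometric-sum argument. Unpacking for $S = \{s_1 < s_2 < \cdots < s_k\} \subseteq [n]$: since $Q$ is obtained from $P^- \ast T_{t,n}$ by adjoining a fresh $\hat{1}$, a maximal chain of $Q_S$ has the form $\hat{0} \prec (p_1, \tau_1) \prec \cdots \prec (p_k, \tau_k) \prec \hat{1}$, where $p_i \in P$ has rank $s_i$, $\tau_i \in T_{t,n}$, the $p_i$ form (after adjoining $\hat{0}_P$ and $\hat{1}_P$) a maximal chain of $P_S$, and the $\tau_i$ satisfy $\rho(\tau_i) \le s_i$ together with $\rho(\tau_{i+1}) - \rho(\tau_i) \le s_{i+1} - s_i$ coming from the Rees product order. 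An identical description holds for $R_S$, with $T_{t,n}$ replaced by $T_{t,n-1}$ and the bound $\rho(\tau_i) \le s_i$ replaced by $\rho(\tau_i) \le s_i - 1$ (reflecting that an element of $P$ of rank $s_i$ has rank $s_i - 1$ as an element of $\bar{P}$).

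Since the $G$-action on the Rees product is trivial on the second coordinate, this description exhibits the set of maximal chains of $Q_S$ (resp.\ of $R_S$) as a Cartesian product of the $G$-set of maximal chains of $P_S$ with the trivial $G$-set of admissible tree chains. Hence $\alpha_Q(S) \cong_G N_Q(S) \cdot \alpha_P(S)$ and $\alpha_R(S) \cong_G N_R(S) \cdot \alpha_P(S)$, where $N_Q(S)$ and $N_R(S)$ are the numbers of admissible tree chains, and it remains to show $N_Q(S) = \varphi_t(S)$ and $N_R(S) = \psi_t(S)$. To this end I would parameterize a tree chain by its rank increments $u_i := \rho(\tau_i) - \rho(\tau_{i-1})$ with $\rho(\tau_0) := 0$; the chain conditions translate into the independent range constraints $u_i \in \{0, 1, \ldots, s_i - s_{i-1}\}$ (with $s_0 := 0$) in the $Q$-case, and the same except with $u_1 \in \{0, 1, \ldots, s_1 - 1\}$ in the $R$-case (the global upper bound on $\rho(\tau_i)$ then follows by telescoping). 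Because $T_{t,n}$ is a tree, once $\tau_k$ is chosen the remaining $\tau_i$ are determined as the ancestors of $\tau_k$ at prescribed ranks, and the number of rank-$(u_1 + \cdots + u_k)$ elements of $T_{t,n}$ is $t^{u_1 + \cdots + u_k}$, so the count factorizes as
$$N_Q(S) \ = \ \prod_{i=1}^k \sum_{u_i = 0}^{s_i - s_{i-1}} t^{u_i} \ = \ \prod_{i=1}^k [s_i - s_{i-1} + 1]_t \ = \ \varphi_t(S),$$
and analogously $N_R(S) = \psi_t(S)$.

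The only step requiring care is the translation of the two Rees product rank inequalities into the clean independent ranges for the $u_i$'s, including the telescoping check that the upper bound on $\rho(\tau_i)$ itself is automatic from the upper bounds on the individual $u_j$'s; once this bookkeeping is in place, the rest of the argument is a direct computation.
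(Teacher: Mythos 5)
Your proposal is correct and takes essentially the same route as the paper's proof: both identify the maximal chains of $Q_S$ (resp.\ $R_S$) with pairs consisting of a maximal chain of $P_S$ and a rank-constrained multichain in the tree, on which $G$ acts trivially, so that $\alpha_Q(S)$ (resp.\ $\alpha_R(S)$) is a multiple of $\alpha_P(S)$, and both count the multichains by choosing the rank sequence and then the top element $\tau_k$ in $t^{\rho(\tau_k)}$ ways, the lower $\tau_i$ being forced as ancestors. Your increment variables $u_i$ are exactly the substitution $a_j = i_j - i_{j-1}$ with which the paper evaluates the sum as $\varphi_t(S)$ (resp.\ $\psi_t(S)$); your explicit telescoping check is a minor expository addition, not a different argument.
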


\begin{proof}
Let $S = \{s_1, s_2,\dots,s_k\} \subseteq [n]$ with 
$s_1 < s_2 < \cdots < s_k$ and let $\rho: T_{t,n} 
\to \NN$ be the rank function of $T_{t,n}$. By the 
definition of Rees product, the maximal chains in 
$Q_S$ have the form
\begin{equation} \label{Qchain}
   \hat{0} \, \prec \, (p_1, \tau_1) \, \prec \, 
   (p_2, \tau_2) \, \prec \, \cdots \, \prec \, 
   (p_k, \tau_k) \, \prec \, \hat{1}
\end{equation}
where $\hat{0} \prec p_1 \prec p_2 \prec \cdots 
\prec p_k \prec \hat{1}$ is a maximal chain in 
$P_S$ and $\tau_1 \preceq \tau_2 \preceq \cdots 
\preceq \tau_k$ is a multichain in $T_{t,n}$ such 
that 

\medskip
\begin{itemize}
\itemsep=0pt
\item[$\bullet$]
$0 \le \rho(\tau_1) \le s_1$ and

\item[$\bullet$]
$\rho(\tau_j) - \rho(\tau_{j-1}) \le s_j - s_{j-1}$ 
for $2 \le j \le k$.
\end{itemize}

\medskip
\noindent
Let $m_t(S)$ be the number of these multichains. 
Since the elements of $G$ act on~(\ref{Qchain}) by 
fixing the $\tau_j$ and acting on the corresponding 
maximal chain of $P_S$, we have $\alpha_Q(S) 
\cong_G m_t(S) \, \alpha_P(S)$. To choose such a 
multichain $\tau_1 \preceq \tau_2 \preceq \cdots 
\preceq \tau_k$, we need to specify the sequence 
$i_1 \le i_2 \le \cdots \le i_k$ of ranks of its 
elements so that $i_j - i_{j-1} \le s_j - s_{j-1}$ 
for $1 \le j \le k$, where $i_0 := s_0 := 0$, and 
choose its maximum element $\tau_k$ in $t^{i_k}$ 
possible ways. Summing over all such sequences, 
we get

\[  m_t (S) \ = \ \sum_{(i_1, i_2,\dots,i_k)} 
    t^{i_k} \ = \ \sum_{0 \le a_j \le s_j - 
    s_{j-1}} t^{a_1 + a_2 + \cdots + a_k} \ = \ 
    \varphi_t (S) \] 

\medskip
\noindent
and the result for $\alpha_Q(S)$ follows. The same
argument applies to $\alpha_R(S)$; one simply has to 
switch the condition for the rank of $\tau_1$ to $0 
\le \rho(\tau_1) \le s_1 - 1$.  
\end{proof}

%\medskip
The proof of the following technical lemma will be 
given after that of Theorem~\ref{thm:main}.

\begin{lemma} \label{lem:phipsi}
For every $S \subseteq [n]$ we have
\begin{equation} \label{eq:chi1}
   \sum_{S \subseteq T \subseteq [n]} (-1)^{n-|T|} 
     \, \varphi_t(T) \ = \ \begin{cases} 
     0, & \text{if $[n] \sm S$ is not stable}, \\
     t^r(1+t)^{n-2r}, & \text{if $[n] \sm S$ is 
                        stable and $n \in S$}, \\
     t^r(1+t)^{n+1-2r}, & \text{if $[n] \sm S$ is 
                        stable and $n \not\in S$} 
   \end{cases} 
\end{equation}

\medskip
\noindent
and
\begin{equation} \label{eq:chi2}
   \sum_{S \subseteq T \subseteq [n]} (-1)^{n-|T|} 
     \, \psi_t(T) \ = \ \begin{cases} 
     0, & \text{if $1 \not\in S$}, \\
     0, & \text{if $[n] \sm S$ is not stable}, \\
     t^r(1+t)^{n-1-2r}, & \text{if $[n] \sm S$ is 
                        stable and $1, n \in S$}, \\
     t^r(1+t)^{n-2r}, & \text{if $[n] \sm S$ is 
                stable, $1 \in S$ and $n \not\in S$}, 
   \end{cases} 
\end{equation}

\smallskip
\noindent
where $r = n - |S|$.
\end{lemma}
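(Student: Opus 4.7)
My plan is to prove both identities simultaneously by induction on $n$, setting up a four-case recurrence that reduces $\Phi_n(S) := \sum_{S \subseteq T \subseteq [n]} (-1)^{n-|T|} \varphi_t(T)$ and its $\psi_t$-analogue $\Psi_n(S)$ to the corresponding sums on $[n-1]$. The algebraic engine is the identity $[m+1]_t = 1 + t \,[m]_t$, which applied to the last factor of $\varphi_t(T \cup \{n\})$ yields the combinatorial recurrence
\[
   \varphi_t(T \cup \{n\}) \ = \ \varphi_t(T) \, + \, t \, \varphi_t(T \cup \{n-1\})
\]
for every $T \subseteq [n-1]$; when $T = \emptyset$ this uses $[n+1]_t = 1 + t\,[n]_t$ (together with the natural convention $\varphi_t(\{n-1\}) = [n]_t$), and when $n-1 \in T$ it uses $[2]_t = 1+t$ together with the trivial equality $T \cup \{n-1\} = T$. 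The same recurrence holds for $\psi_t$, by the identical argument applied to its last factor.

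Next I would split $\Phi_n(S)$ according to whether $n \in T$. The summands with $n \notin T$ contribute $-\Phi_{n-1}(S)$ if $n \notin S$, and vanish (as the sum is empty) if $n \in S$. For the summands with $n \in T$, writing $T = T' \cup \{n\}$ and substituting the recurrence produces $\Phi_{n-1}(S \sm \{n\}) + t\, \Xi_n(S)$, where
\[
   \Xi_n(S) \ := \sum_{\substack{T' \subseteq [n-1] \\ T' \supseteq S \sm \{n\}}}
   (-1)^{(n-1)-|T'|} \, \varphi_t(T' \cup \{n-1\}).
\]
The crucial point is that $\Xi_n(S)$ vanishes whenever $n-1 \notin S$, by the sign-reversing involution $T' \mapsto T' \triangle \{n-1\}$: both subsets yield the same value of $\varphi_t(T' \cup \{n-1\})$, while the sign $(-1)^{|T'|}$ flips. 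When $n - 1 \in S$, on the other hand, every $T'$ already contains $n-1$, so $T' \cup \{n-1\} = T'$ and $\Xi_n(S) = \Phi_{n-1}(S \sm \{n\})$. Combining these observations yields the four-case recurrence
\[
  \Phi_n(S) \ = \ \begin{cases}
     0, & n,\, n-1 \notin S, \\
     t\, \Phi_{n-1}(S), & n-1 \in S,\ n \notin S, \\
     \Phi_{n-1}(S \sm \{n\}), & n \in S,\ n-1 \notin S, \\
     (1+t)\, \Phi_{n-1}(S \sm \{n\}), & n,\, n-1 \in S,
  \end{cases}
\]
together with the formally identical recurrence for $\Psi_n(S)$.

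Finally, I would close the induction using $n = 1$ as the base case, verified by direct computation: $\Phi_1(\emptyset) = t$, $\Phi_1(\{1\}) = 1+t$, $\Psi_1(\emptyset) = 0$, $\Psi_1(\{1\}) = 1$, which match the claimed piecewise formulas and in particular account for the additional vanishing of $\Psi_n(S)$ when $1 \notin S$. In each of the four cases one checks, using $r = n - |S|$, that the recurrence transports the claimed expression on $[n-1]$ to the claimed expression on $[n]$: the stability of $[n] \sm S$ reduces in each non-trivial case to that of the relevant subset of $[n-1]$, the factor $t$ in Case~2 raises $r$ by one while dropping the $(1+t)$-exponent by two, and the factor $1+t$ in Case~4 raises the $(1+t)$-exponent by one under the shift $n-1 \to n$. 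I expect the only real obstacle to be the bookkeeping of the four recursive cases against the two pairs of piecewise formulas — in particular, verifying that the condition $1 \in S$ in the formula for $\Psi_n(S)$ is preserved from the base case through every application of Cases~2 and~4, since this is the feature that distinguishes the $\psi_t$ identity from the $\varphi_t$ one despite their sharing exactly the same recurrence.
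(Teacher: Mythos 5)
Your proposal is correct, and it takes a genuinely different route from the paper's proof. The paper exploits the product structure of $\varphi_t$ directly: since every $T$ in the sum contains $S = \{s_1 < \cdots < s_k\}$, the alternating sum factors over the gaps of $S$ as $\chi_t(s_1)\,\chi_t(s_2-s_1)\cdots\chi_t(s_k-s_{k-1})\,\omega_t(n-s_k)$, where $\chi_t(m)$ is the alternating sum of $\varphi_t(T)$ over $m \in T \subseteq [m]$ and $\omega_t(m)$ is the unrestricted one; it then evaluates $\chi_t(m)$ as $1+t,\ t,\ 0$ for $m=1,\ m=2,\ m\ge 3$ (the vanishing, an alternating sum over compositions, is left as an exercise) and gets $\omega_t$ from $\omega_t(m)=\chi_t(m)-\omega_t(m-1)$, so the closed form appears at once as a product over gaps of size $1$ and $2$. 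The $\psi_t$-identity~(\ref{eq:chi2}) is then not re-proved but reduced to~(\ref{eq:chi1}): for $1 \in S$ one uses that $\psi_t(T)$ on $[n]$ equals $\varphi_t$ of the downshift of $T \sm \{1\}$ in $[n-1]$, and for $1 \notin S$ the terms cancel in pairs $\{T,\, T \cup \{1\}\}$ since $\psi_t(T) = \psi_t(T \cup \{1\})$. You instead run a single induction on $n$, powered by the recurrence $\varphi_t(T \cup \{n\}) = \varphi_t(T) + t\,\varphi_t(T \cup \{n-1\})$ (a correct consequence of $[m+1]_t = 1 + t\,[m]_t$ in all three boundary situations $T = \emptyset$, $n-1 \in T$, and the generic case) together with the sign-reversing involution $T' \mapsto T' \mathbin{\triangle} \{n-1\}$; your involution at the top element plays the role that the paper's pairing at the element $1$ plays for $\psi_t$, but you apply it to $\varphi_t$ and $\psi_t$ uniformly, so that the extra condition $1 \in S$ in~(\ref{eq:chi2}) emerges solely from the base case $\Psi_1(\emptyset) = 0$ rather than from a separate argument. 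I checked your four-case recurrence, the base cases, and the transport of the closed forms through all four cases for both $\Phi_n$ and $\Psi_n$ (including the stability bookkeeping, e.g.\ that $[n] \sm S$ is stable if and only if the relevant complement in $[n-1]$ is, and that $r$ increases by one exactly in your Case~2); everything is consistent. Two small presentational points: state the convention $\varphi_t(\varnothing) = \psi_t(\varnothing) = 1$ explicitly, since it is used both in the sums and in the $T = \emptyset$ case of the recurrence, and note that the recurrence is only ever invoked for $n \ge 2$, so your ad hoc convention $\varphi_t(\{n-1\}) = [n]_t$ at $n = 1$ is never actually needed. As for what each approach buys: the paper's factorization localizes all the work in one one-dimensional computation and makes the shape of the answer transparent ($t$ per gap of size $2$, $1+t$ per gap of size $1$, an extra $t$ or nothing at the right end), at the cost of an omitted generating-function identity and a separate reduction for $\psi_t$; your induction is completely self-contained and treats both identities in parallel, at the cost of the four-case bookkeeping, which you have organized correctly.
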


\smallskip
\noindent
\emph{Proof of Theorem~\ref{thm:main}}. Using 
Equations~(\ref{eq:beta(S)1}) and~(\ref{eq:beta(S)2}), 
as well as Lemma~\ref{lem:alpha}, we compute that

  \begin{eqnarray*}
    \beta_Q ([n]) & = & \sum_{T \subseteq [n]} 
    (-1)^{n-|T|} \, \alpha_Q (T) \ \cong_G \ 
    \sum_{T \subseteq [n]} (-1)^{n-|T|} \, 
    \varphi_t(T) \, \alpha_P (T) \\
  & & \\
  & = & \sum_{T \subseteq [n]} (-1)^{n-|T|} \, 
    \varphi_t(T) \, \sum_{S \subseteq T} \beta_P(S)
  \\ & & \\
  & = & \sum_{S \subseteq [n]} \beta_P(S) 
    \sum_{S \subseteq T \subseteq [n]} (-1)^{n-|T|} 
    \, \varphi_t(T) 
  \end{eqnarray*}

\noindent
and find similarly that
\[ \beta_R ([n]) \ \cong_G \ \sum_{S \subseteq [n]} 
   \beta_P(S) \sum_{S \subseteq T \subseteq [n]} 
   (-1)^{n-|T|} \, \psi_t(T). \]

\smallskip
\noindent
The proof follows from these formulas and 
Lemma~\ref{lem:phipsi}. 
For the last statement of the theorem one has to note 
that, as a consequence of~\cite[Corollary~2]{BW05}, 
if $P$ is Cohen--Macaulay over $\kk$, then so are the 
Rees products $P^{-} \ast T_{t,n}$ and $\bar{P} \ast 
T_{t,n-1}$.
\qed

\medskip
\noindent
\emph{Proof of Lemma~\ref{lem:phipsi}}. Let us denote
by $\chi_t(S)$ the left-hand side of~(\ref{eq:chi1})
and write $S = \{s_1, s_2,\dots,s_k\}$, with $1 \le 
s_1 < s_2 < \cdots < s_k \le n$. By definition, we 
have 
\begin{equation} \label{eq:chi}
   \chi_t(S) \ = \ \chi_t(s_1) \, \chi_t(s_2 - s_1) 
   \cdots \chi_t(s_k - s_{k-1}) \, \omega_t(n - s_k),
\end{equation}
where
  \begin{eqnarray*} 
    \chi_t(n) & := & \sum_{n \in T \subseteq [n]} 
               (-1)^{n-|T|} \, \varphi_t(T) \\
    & & \\
    \omega_t(n) & := & \sum_{T \subseteq [n]} 
               (-1)^{n-|T|} \, \varphi_t(T) 
  \end{eqnarray*}

\smallskip
\noindent
for $n \ge 1$ and $\omega_t(0) := 1$. We claim that 

\[ \chi_t(n) \ = \ \begin{cases} 
     1+t, & \text{if $n = 1$}, \\
     t, & \text{if $n = 2$}, \\
     0, & \text{if $n \ge 3$} \end{cases} 
\ \ \ \ \ \text{and} \ \ \ \ \ 
\omega_t(n) \ = \ \begin{cases} 
     1, & \text{if $n = 0$}, \\
     t, & \text{if $n = 1$}, \\
     0, & \text{if $n \ge 2$}. 
   \end{cases} \]

\medskip
\noindent
Equation~(\ref{eq:chi1}) is a direct consequence 
of~(\ref{eq:chi}) and this claim. To verify the 
claim, note that the defining equation for 
$\chi_t(n)$ can be rewritten as 
\[ \chi_t(n) \ = \ \sum_{(a_1, a_2,\dots,a_k) 
   \vDash n} (-1)^{n-k} \, [a_1 + 1]_t \, [a_2 + 1]_t 
   \cdots [a_k + 1]_t, \]
where the sum ranges over all sequences 
(compositions) $(a_1, a_2,\dots,a_k)$ of positive 
integers summing to $n$. We leave it as a simple 
combinatorial exercise for the interested reader
to show (for instance, by standard generating 
function methods) that $\chi_t(n) = 0$ for 
$n \ge 3$. The claim follows from this fact and 
the obvious recurrence $\omega_t(n) = \chi_t(n) - 
\omega_t(n-1)$, valid for $n \ge 1$.

Finally, note that Equation~(\ref{eq:chi2}) is 
equivalent to~(\ref{eq:chi1}) in the case $1 \in 
S$. Otherwise, the terms in the left-hand side 
can be partitioned into pairs of terms, 
corresponding to pairs $\{T, T \cup \{1\}\}$ of 
subsets with $1 \not\in T$, canceling each other.
This shows that the left-hand side vanishes.
\qed

\section{Symmetric function identities}
\label{sec:app}

This section derives Equations~(\ref{eq:Ge2}) 
-- (\ref{eq:Ath1-}) from Theorem~\ref{thm:main} 
(Corollaries~\ref{cor:Gessel},~\ref{cor:Ath2} 
and~\ref{cor:Ath1}) and interprets combinatorially
the Schur-positive symmetric functions which 
appear there. We first explain why Gessel's 
identities are special cases of this theorem. 
The set of ascents of a permutation $w \in 
\fS_n$ is defined as $\Asc(w) := [n-1] \sm 
\Des(w)$ and, similarly, we have $\Asc(\pP) 
:= [n-1] \sm \Des(\pP)$ for every standard 
Young tableau $\pP$ of size $n$. Let us recall 
the fact (used in the following proof) that 
the reduced homology groups of a poset with a 
minimum or maximum element vanish. 

\begin{corollary} \label{cor:Gessel}
Equations~(\ref{eq:Ge2}) and~(\ref{eq:Ge1}) are 
valid for the functions 

\begin{equation} \label{eq:corGes1}
  \xi_{n,k} (\bx) \ = \ \sum_{\lambda \vdash n} 
    c_{\lambda,k} \cdot s_\lambda (\bx) \ = \ 
    \sum_{w} F_{n,\Des(w)}(\bx)
\end{equation}

\medskip
\noindent
and

\begin{equation} \label{eq:corGes2}
  \gamma_{n,k} (\bx) \ = \ \sum_{\lambda \vdash n} 
    d_{\lambda,k} \cdot s_\lambda (\bx) \ = \ 
    \sum_{w} F_{n,\Des(w)}(\bx), 
\end{equation}

\medskip
\noindent
where $c_{\lambda,k}$ ${\rm (}$respectively, 
$d_{\lambda,k}{\rm )}$ stands for the number of 
tableaux $\pP \in \SYT(\lambda)$ for which $\Asc(\pP) 
\in \stab([2,n-2])$ ${\rm (}$respectively, 
$\Asc(\pP) \in \stab([n-2]){\rm )}$ has $k$ 
elements and, similarly, $w \in \fS_n$ runs through 
all permutations for which $\Asc(w^{-1}) \in 
\stab([2,n-2])$ ${\rm (}$respectively, $\Asc(w^{-1}) 
\in \stab([n-2]){\rm )}$ has $k$ elements.
\end{corollary}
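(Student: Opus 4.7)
The plan is to apply Theorem~\ref{thm:main} to the bounded graded $\fS_n$-poset $P$ of rank $n+1$ obtained from the Boolean algebra $B_n$ (with the natural $\fS_n$-action of Example~\ref{ex:uniform}) by adjoining a new maximum element; since $B_n$, hence $P$, is Cohen--Macaulay, the strong form of Theorem~\ref{thm:main} applies.

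The crucial observation is that the rank-$n$ level of $P$ is the $\fS_n$-fixed singleton $\{[n]\}$. Inserting $[n]$ into maximal chains yields an $\fS_n$-equivariant bijection showing $\alpha_P(T \cup \{n\}) \cong_{\fS_n} \alpha_P(T)$ for every $T \subseteq [n-1]$, so the M\"obius inversion~(\ref{eq:beta(S)1}) forces $\beta_P(T) = 0$ whenever $n \in T$, while $\beta_P(T) \cong_{\fS_n} \beta_{B_n}(T)$ otherwise. Consequently the first sum of~(\ref{eq:main1}) and the second sum of~(\ref{eq:main2}) vanish completely, since $[n] \setminus S$ contains $n$ for every index $S$ appearing in them, so only one sum survives in each equation. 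Applying $\ch$ and invoking Stanley's classical identity $\ch(\beta_{B_n}(T)) = \sum_{\lambda \vdash n} f^\lambda_T\, s_\lambda(\bx)$ --- with $f^\lambda_T$ counting $\pP \in \SYT(\lambda)$ satisfying $\Des(\pP) = T$ --- and setting $k = |S|$, the surviving sum in~(\ref{eq:main1}) becomes exactly the coefficient of $z^n$ in the right-hand side of~(\ref{eq:Ge1}) with $\gamma_{n,k}$ as claimed, while the surviving sum in~(\ref{eq:main2}) matches the coefficient of $z^n$ in~(\ref{eq:Ge2}) with $\xi_{n,k}$, using $\Des(\pP) = [n-1]\setminus S \Leftrightarrow \Asc(\pP) = S$.

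To identify the left-hand sides of~(\ref{eq:main1}) and~(\ref{eq:main2}) under $\ch$ with those of Gessel's identities, I would apply the Shareshian--Wachs uniform formula~(\ref{eq:uniform}) to the $\fS_n$-uniform family $(B_0, B_1, \ldots, B_n)$ of Example~\ref{ex:uniform}, take characteristic via $\ch(\rho \uparrow^{\fS_n}_{\fS_k \times \fS_{n-k}}) = \ch(\rho)\, h_k(\bx)$, sum over $n$ to obtain a convolution equation for the generating function $\sum_n z^n \ch(L((B_n \ast T_{t,n})_{-}; \fS_n))$, and solve it using $E(\bx;z) H(\bx;-z) = 1$; the Lefschetz signs arising from $\beta = (-1)^{\mathrm{top}} \tilde{H}_{\mathrm{top}}$ for the Cohen--Macaulay Rees products are absorbed into the substitution $z \mapsto -z$. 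An entirely analogous argument produces the generating function needed for~(\ref{eq:Ge2}).

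Finally, the equivalent sums over permutations in~(\ref{eq:corGes1}) and~(\ref{eq:corGes2}) follow by expanding each Schur function via~(\ref{eq:sFexpan}) and invoking the standard descent properties $\Des(\pP(w)) = \Des(w^{-1})$ and $\Des(Q(w)) = \Des(w)$ of the Robinson--Schensted correspondence, which translates the tableau condition $\Asc(\pP) \in \stab(\cdot)$ into the permutation condition $\Asc(w^{-1}) \in \stab(\cdot)$ with summand $F_{n, \Des(w)}(\bx)$. I expect the identification of the left-hand side via~(\ref{eq:uniform}) to be the main technical obstacle, as it requires careful handling of inductions, Lefschetz signs and generating-function manipulations; the remainder of the argument is essentially bookkeeping driven by the vanishing $\beta_P(T) = 0$ for $n \in T$ and Stanley's formula.
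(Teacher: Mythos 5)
Your overall architecture coincides with the paper's own proof: apply Theorem~\ref{thm:main} with $P^- = B_n$ carrying the $\fS_n$-action of Example~\ref{ex:uniform}; discard the terms $\beta_P(T)$ with $n \in T$ (the paper disposes of these with the one-line remark that $B_n$ has a maximum element, and your pairing argument $\alpha_P(T \cup \{n\}) \cong_{\fS_n} \alpha_P(T)$ is exactly the correct justification, correctly killing the first sum of (\ref{eq:main1}) and the second sum of (\ref{eq:main2})); identify the surviving sums with the coefficients of $z^n$ in (\ref{eq:Ge1}) and (\ref{eq:Ge2}) via Stanley's formula $\ch(\beta_{B_n}(S)) = \sum_{\lambda \vdash n} c_{\lambda,S} \, s_\lambda(\bx)$; and pass from tableaux to permutations via (\ref{eq:sFexpan}) and the Robinson--Schensted properties $\Des(w) = \Des(Q(w))$, $\Des(w^{-1}) = \Des(\pP(w))$. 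Moreover, your derivation of $1 + \sum_{n \ge 1} \ch \bigl( \widetilde{H}_{n-1}((B_n \ast T_{t,n})_{-}; \CC) \bigr) z^n = (1-t)E(\bx;tz)/\bigl(E(\bx;tz)-tE(\bx;z)\bigr)$ from the uniform-sequence identity (\ref{eq:uniform}), the convolution, and $E(\bx;z)H(\bx;-z)=1$ with the Lefschetz signs absorbed by $z \mapsto -z$ is sound; it is essentially the Shareshian--Wachs proof of their Equation~(3.3), which the paper cites and which Proposition~\ref{prop:Ath1} replicates in type $B$.

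The genuine gap is your closing claim that ``an entirely analogous argument produces the generating function needed for (\ref{eq:Ge2}).'' It does not. The identity (\ref{eq:uniform}) applies only to posets of the form $(P_k \ast T_{t,k})_{-}$ attached to a uniform sequence, and $(B_n \sm \{\varnothing\}) \ast T_{t,n-1}$ is not of this form: the tree has height $n-1$ rather than $n$, and the rank constraint is shifted, so these are genuinely different posets with different generating functions. Nor does the recursion close up on itself if you try to imitate its proof: applying Sundaram's identity (\ref{eq:sun}) to the bounded extension of $(B_n \sm \{\varnothing\}) \ast T_{t,n-1}$ produces open intervals $(\hat{0}, x)$ isomorphic to the order ideals $I_j(B_k)$, which are not Rees products from the same family, so their homology must enter as an external input --- namely Equation~(\ref{eq:Ij(Bn)}), one of the main results of \cite{SW09}. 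This is exactly the structure of the paper's Proposition~\ref{prop:Ath2} in type $B$, and of the cited special case \cite[Corollary~5.2]{SW09} at $t=1$, which the paper invokes for this step. To complete your proof you must replace ``entirely analogous'' with this Sundaram-plus-$I_j(B_n)$ argument (or cite it); the remainder of your proposal is correct and agrees with the paper.
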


\begin{proof}
We will apply Theorem~\ref{thm:main} when $P^-$ is the 
Boolean lattice $B_n$, considered as an $\fS_n$-poset 
as in Example~\ref{ex:uniform}. On the one hand, we have 
the equality
\[ 1 \, + \, \sum_{n \ge 2} \, 
   \ch \left( \widetilde{H}_{n-1} 
   ((B_n \sm \{\varnothing\}) \ast T_{t,n-1}; 
   \CC) \right) z^n 
   \ = \ \frac{1-t} {E(\bx; tz) - tE(\bx; z)} \]

\smallskip
\noindent
which, although not explicitly stated in~\cite{SW09},
follows as in the proof of its special case $t=1$ 
\cite[Corollary~5.2]{SW09}. On the other hand,
since $B_n$ has a maximum element, the second summand
in the right-hand side of Equation~(\ref{eq:main2})
vanishes and hence this equation gives
\[ \ch \left( \widetilde{H}_{n-1} 
   ((B_n \sm \{\varnothing\}) \ast T_{t,n-1}; \CC) 
   \right) \ = \sum_{S \in \stab ([2,n-2])} 
   \ch \left( \beta_{B_n} ([n-1] \sm S) \right) \,
   t^{|S|+1} \, (1+t)^{n-2-2|S|} \]

\smallskip
\noindent
for $n \ge 2$. The representations $\beta_{B_n} (S)$
for $S \subseteq [n-1]$ are known to satisfy (see, 
for instance, \cite[Theorem~4.3]{Sta82}) 
\[ \ch \left( \beta_{B_n} (S) \right) \ = \ 
   \sum_{\lambda \vdash n} c_{\lambda,S} \cdot 
   s_\lambda (\bx), \]

\smallskip
\noindent
where $c_{\lambda,S}$ is the number of standard 
Young tableaux of shape $\lambda$ and descent set 
equal to $S$. Combining the previous three 
equalities yields the first equality in 
Equation~(\ref{eq:corGes1}). The second equality 
follows from the first by expanding $s_\lambda(\bx)$ 
according to Equation~(\ref{eq:sFexpan}) to get
\[ \xi_{n,k} (\bx) \ = \ \sum_{\lambda \vdash n} 
   \sum_\pP \sum_{Q \in \SYT (\lambda)} 
   F_{n, \Des(Q)} (\bx) \]

\smallskip
\noindent
where, in the inner sum, $\pP$ runs through all 
tableaux in $\SYT(\lambda)$ for which $\Asc(\pP) \in 
\pP_\stab([2,n-2])$ has $k$ elements, and then using 
the Robinson--Schensted correspondence and its 
standard properties $\Des(w) = \Des(Q(w))$ and 
$\Des(w^{-1}) = \Des(\pP(w))$ to replace the 
summations with one running over elements of $\fS_n$,
as in the statement of the corollary. 

The proof of~(\ref{eq:corGes2}) is entirely 
similar; one has to use Equation~(\ref{eq:main1}) 
instead of~(\ref{eq:main2}), as well as the equality
\[ 1 \, + \, \sum_{n \ge 1} \, 
   \ch \left( \widetilde{H}_{n-1} 
   ((B_n \ast T_{t,n})_{-}; \CC) \right) z^n \ = \ 
   \frac{(1-t) E(\bx; tz)} {E(\bx; tz) - tE(\bx; z)}. \]

\smallskip
\noindent
The latter follows from the proof of Equation~(3.3) 
in~\cite[pp.~15--16]{SW09}, where the left-hand side 
is equal to $-F_t(-z)$, in the notation used in that 
proof. 
\end{proof}

\begin{example} \rm
The coefficient of $z^4$ in the left-hand sides 
of Equations~(\ref{eq:Ge2}) and~(\ref{eq:Ge1}) 
equals

\smallskip
\begin{itemize}
\itemsep=3pt
\item[$\bullet$]
$e_4(\bx) \, (t + t^2 + t^3) + e_2(\bx)^2 \, t^2$, 
and

\item[$\bullet$]
$e_4(\bx) \, (t + t^2 + t^3 + t^4) + e_1(\bx) 
e_3(\bx) \, (t^2 + t^3) + e_2(\bx)^2 \, (t^2 + t^3)$,
\end{itemize}

\smallskip
\noindent
respectively. These expressions may be rewritten as

\smallskip
\begin{itemize}
\itemsep=3pt
\item[$\bullet$]
$s_{(1,1,1,1)} (\bx) \, t(1+t)^2 + s_{(2,1,1)} (\bx) 
\, t^2 + s_{(2,2)} (\bx) \, t^2$, and

\item[$\bullet$]
$s_{(1,1,1,1)} (\bx) \, t(1+t)^3 + 2 s_{(2,1,1)} 
(\bx) \, t^2(1+t) + s_{(2,2)} (\bx) \, t^2(1+t)$,
\end{itemize}

\smallskip
\noindent
respectively, and hence $\xi_{4,0} (\bx) = 
s_{(1,1,1,1)} (\bx)$, $\xi_{4,1} (\bx) = s_{(2,1,1)} 
(\bx) + s_{(2,2)} (\bx)$, $\gamma_{4,0} (\bx) = 
s_{(1,1,1,1)} (\bx)$ and $\gamma_{4,1} (\bx) = 2 
s_{(2,1,1)} (\bx) + s_{(2,2)} (\bx)$. We leave it to 
the reader to verify that these formulas agree with 
Corollary~\ref{cor:Gessel}.
\qed
\end{example}

We now focus on the identities~(\ref{eq:Ath2+})
-- (\ref{eq:Ath1-}). We will apply 
Theorem~\ref{thm:main} to the collection $sB_n$ 
of all subsets of $\{1, 2,\dots,n\} \cup \{ -1, 
-2,\dots,-n\}$ which do not contain $\{i, -i\}$ 
for any index $i$, partially ordered by 
inclusion. This signed analogue of the Boolean 
algebra $B_n$ is a graded poset of rank $n$, 
having the empty set as its minimum element, on 
which the hyperoctahedral group $\bB_n$ acts in
the obvious way \cite[Section~6]{Sta82}, turning 
it into a $\bB_n$-poset. It is isomorphic to the 
poset of faces (including the empty one) of the 
boundary complex of the $n$-dimensional 
cross-polytope and hence it is Cohen--Macaulay 
over $\ZZ$ and any field. The left-hand sides of 
Equations~(\ref{eq:main1}) and~(\ref{eq:main2}) 
for $P^{-} = sB_n$ will be computed using the 
methods of~\cite{SW09}. 

Consider the $n$-element chain $C_n = \{0, 
1,\dots,n-1\}$, with the usual total order. 
Following~\cite{SW09}, we denote by $I_j(B_n)$ the 
order ideal of elements of $(B_n \sm \{\varnothing\})
\ast C_n$ which are strictly less than $([n], j)$. 
Then $I_j(B_n)$ is an $\fS_n$-poset for every $j \in 
C_n$ and one of the main results of~\cite{SW09} (see
\cite[p.~21]{SW09} \cite[Equation~(2.5)]{SW16b}) 
states that

\begin{equation} \label{eq:Ij(Bn)}
   1 \, + \, \sum_{n \ge 1} \, z^n \, 
   \sum_{j=0}^{n-1} \, t^j \, \ch \left(
   \widetilde{H}_{n-2} (I_j(B_n); \CC) \right) 
   \ = \ \frac{(1-t) E(\bx; z)}{E(\bx; tz) - tE(\bx; z)}.
\end{equation}

\smallskip
\begin{proposition} \label{prop:Ath2}
For the $\bB_n$-poset $sB_n$ we have

\begin{equation} \label{eq:prop2}
  1 \, + \, \sum_{n \ge 1} \, \ch_\bB   
  \left( \widetilde{H}_{n-1} ((sB_n \sm \{\varnothing\}) 
  \ast T_{t,n-1}; \CC) \right) z^n 
  \ = \ \frac{(1-t) E(\by; z)} 
        {E(\bx; tz) E(\by; tz) - tE(\bx; z) E(\by; z)}. 
\end{equation}
\end{proposition}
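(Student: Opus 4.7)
The plan is to mimic the proof of \cite[Corollary~5.2]{SW09}---which establishes the $t=1$ special case of the Boolean analog of~(\ref{eq:prop2}) using the Shareshian--Wachs uniform-poset machinery---and adapt it equivariantly to the hyperoctahedral group $\bB_n$ acting on the signed Boolean algebra $sB_n$. The key tool is the recursion~(\ref{eq:uniform}). First I would adjoin a formal maximum $\hat{1}$ to form the bounded graded $\bB_n$-poset $sB_n \cup \{\hat{1}\}$ of rank $n+1$, and verify that it is $\bB_n$-uniform in the sense recalled in Section~\ref{sec:pre}. The relevant observations are that $\bB_n$ acts transitively on signed subsets of each given size (so every orbit count $W_k$ in~(\ref{eq:uniform}) equals one); that the stabilizer of a size-$k$ signed subset is the subgroup $\fS_k \times \bB_{n-k} \subset \bB_n$---where $\fS_k$ permutes the chosen signed elements without flipping their signs and $\bB_{n-k}$ acts on the remaining indices---and that the upper interval of such a subset is isomorphic, as an $\fS_k \times \bB_{n-k}$-poset, to $sB_{n-k} \cup \{\hat{1}\}$ with $\fS_k$ acting trivially.

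Applying~(\ref{eq:uniform}) to $sB_n \cup \{\hat{1}\}$ and pulling the resulting identity through $\ch_\bB$ converts the equivariant recursion into a scalar convolution equation in $\CC[[z]]$ whose coefficients are symmetric functions in $(\bx,\by)$: here I would use the multiplicativity of $\ch_\bB$ under induction from the Young subgroup $\bB_k \times \bB_{n-k}$ and the identity $\ch_\bB(1_{\fS_k} \uparrow^{\bB_k}_{\fS_k}) = h_k(\bx,\by)$ provided by the bulleted list in Section~\ref{sec:pre}. Solving this convolution equation expresses the relevant generating series as a rational function of $H(\bx;z)$, $H(\bx;tz)$, $H(\bx,\by;z) = H(\bx;z)H(\by;z)$ and $H(\bx,\by;tz)$. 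Converting from $H$-series to $E$-series via $E(\bx;z)H(\bx;-z)=1$ and its $(\bx,\by)$-analogue, and substituting $z \mapsto -z$ (which passes from the Lefschetz character to the character of top reduced homology, as allowed by the Cohen--Macaulay property of $sB_n$ and its Rees products), would then rewrite the output in the closed form displayed on the right-hand side of~(\ref{eq:prop2}).

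The main obstacle is reconciling the Rees product that the uniform recursion naturally produces, namely $\bigl((sB_n \cup \{\hat{1}\}) \ast T_{t,n+1}\bigr)_{-}$, with the Rees product $(sB_n \sm \{\varnothing\}) \ast T_{t,n-1}$ that actually appears on the left of~(\ref{eq:prop2}). In the Boolean case this identification is painless because $B_n$ has a unique maximum, but in the signed case $sB_n$ has many maximal elements and a separate argument, mirroring the one in~\cite[Corollary~5.2]{SW09}, is required. The expectation is that it can be carried out by partitioning the maximal chains of both posets according to their topmost $sB_n$-entry and using the Cohen--Macaulay property of the Rees products to bridge the resulting generating functions.
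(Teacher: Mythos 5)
Your proposal runs on the wrong engine, and the step you defer as ``the main obstacle'' is in fact the entire content of the proposition. The uniform-poset recursion~(\ref{eq:uniform}) is the tool the paper uses to prove Proposition~\ref{prop:Ath1}: applied to the sequence $(sB_0,\dots,sB_n)$ (no maximum adjoined --- the uniformity conditions refer to the principal filters $\{y \ge x\} \cong sB_k$, so adjoining $\hat{1}$ is unnecessary), it produces the Lefschetz characters $L((sB_{n-k} \ast T_{t,n-k})_{-})$, i.e.\ Rees products of the form $(P^- \ast T_{t,n})_{-}$ from~(\ref{eq:main1}). Your variant with $\hat{1}$ adjoined outputs $((sB_m \cup \{\hat{1}\}) \ast T_{t,m+1})_{-}$, which is again not the target $\bar{P} \ast T_{t,n-1} = (sB_n \sm \{\varnothing\}) \ast T_{t,n-1}$ of~(\ref{eq:main2}). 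These two families have genuinely different homology --- compare~(\ref{eq:main1}) with~(\ref{eq:main2}), or the right-hand sides of~(\ref{eq:prop1}) and~(\ref{eq:prop2}) --- so no formal identification, and in particular no partition of maximal chains by topmost $sB_n$-entry combined with Cohen--Macaulayness, can bridge them; such a partition computes flag $f$-vector data, not homology characters. Your framing also mischaracterizes \cite[Corollary~5.2]{SW09}: its proof does not use the uniform machinery at all, but Sundaram's identity~(\ref{eq:sun}).

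What the paper actually does is apply~(\ref{eq:sun}) to the bounded extension of $(sB_n \sm \{\varnothing\}) \ast T_{t,n-1}$ itself. The orbit analysis is close to observations you make correctly: for $0 \le j < k \le n$ there are $t^j$ $\bB_n$-orbits of rank-$k$ elements whose tree coordinate has rank $j$, with stabilizer $\fS_k \times \bB_{n-k}$. The step entirely absent from your proposal is the identification of the open lower interval $(\hat{0},x)$ with the order ideal $I_j(B_k)$ of $(B_k \sm \{\varnothing\}) \ast C_k$, which after applying $\ch_\bB$ (via transitivity of induction through $\bB_k \times \bB_{n-k}$, with $\ch_\bB(1_{\bB_{n-k}}) = h_{n-k}(\bx)$) turns Sundaram's identity into the statement that the left-hand side of~(\ref{eq:prop2}) equals $H(\bx;-z)$ times $1 + \sum_{n \ge 1} z^n \sum_{j=0}^{n-1} t^j \ch \bigl( \widetilde{H}_{n-2}(I_j(B_n);\CC) \bigr)(\bx,\by)$. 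The closed form then comes from the nontrivial external input~(\ref{eq:Ij(Bn)}) of~\cite{SW09}, a $t$-refinement carrying the Eulerian quasisymmetric function content, together with $E(\bx;z)H(\bx;-z)=1$ and $E(\bx,\by;z)=E(\bx;z)E(\by;z)$. Without some such input on the homology of the ideals $I_j(B_k)$ (or, alternatively, feeding Stanley's computation of $\beta_{sB_n}(S)$ into~(\ref{eq:main2}) and summing --- a different route you do not propose), your convolution equation cannot even be set up, let alone solved for the right-hand side of~(\ref{eq:prop2}).
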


\smallskip
\begin{proof}
Following the reasoning in the proof 
of~\cite[Corollary~5.2]{SW09}, we apply~(\ref{eq:sun})
to the Cohen--Macaulay $\bB_n$-poset obtained from 
$(sB_n \sm \{\varnothing\}) \ast T_{t,n-1}$ by adding
a minimum and a maximum element. For $0 \le j < k \le 
n$, there are exactly $t^j$ $\bB_n$-orbits of elements 
$x$ of rank $k$ in this poset with second coordinate 
of rank $j$ in $T_{t,n-1}$ and for each one of these, 
the open interval $(\hat{0}, x)$ is isomorphic to $I_j(B_k)$ 
and the stabilizer of $x$ is isomorphic to $\fS_k 
\times \bB_{n-k}$. We conclude that
\[ \widetilde{H}_{n-1} ((sB_n \sm \{\varnothing\}) 
   \ast T_{t,n-1}; \CC) \ \cong_{\bB_n} \ 
   \bigoplus_{k=0}^n \, (-1)^{n-k} \, 
   \bigoplus_{j=0}^{k-1} \, t^j \left(
   \widetilde{H}_{k-2} (I_j(B_k); \CC) \otimes 
   1_{\bB_{n-k}} \right)
   \uparrow^{\bB_n}_{\fS_k \times \bB_{n-k}.} \]

\smallskip
\noindent
Applying the map $\ch_\bB$ and 
using the transitivity $\uparrow^{\bB_n}_{\fS_k 
\times \bB_{n-k}} \, \cong_{\bB_n} \, \uparrow^{\bB_k 
\times \bB_{n-k}}_{\fS_k \times \bB_{n-k}} \ 
\uparrow^{\bB_n}_{\bB_k \times \bB_{n-k}}$ of 
induction and properties of $\ch_\bB$ discussed in 
Section~\ref{sec:pre}, the right-hand side becomes
\[ \sum_{k=0}^n \, (-1)^{n-k} \, 
   \sum_{j=0}^{k-1} \, t^j \, \ch \left(
   \widetilde{H}_{k-2} (I_j(B_k); \CC) \right)
   (\bx, \by) \cdot h_{n-k}(\bx). \]

\smallskip
\noindent
Thus, the left-hand side of Equation~(\ref{eq:prop2})
is equal to
\[ H(\bx; -z) \cdot \left( \, 1 \, + \, \sum_{n \ge 1} 
   \, z^n \, \sum_{j=0}^{n-1} \, t^j \, \ch \left(
   \widetilde{H}_{n-2} (I_j(B_n); \CC) \right)
   (\bx, \by) \, \right) \]

\smallskip
\noindent
and the result follows from Equation~(\ref{eq:Ij(Bn)}) 
and the identities $E(\bx; z) H(\bx; -z) = 1$ and 
$E(\bx, \by; z) = E(\bx; z) E(\by; z)$.
\end{proof}

\smallskip
Recall the definition of the sets $\Des(w)$ and 
$\Des(\pP)$ for a signed permutation $w \in \bB_n$ 
and standard Young bitableau $\pP$ of size $n$, 
respectively, from Section~\ref{sec:pre}. Following
\cite[Section~6]{Sta82}, we define the \emph{type 
$B$ descent set} of $\pP = (\pP^+, \pP^{-})$ as 
$\Des_B(\pP) = \Des(\pP) \cup \{n\}$, if $n$ appears
in $\pP^+$, and $\Des_B(\pP) = \Des(\pP)$ otherwise.
The complement of $\Des_B(\pP)$ in the set $[n]$ is 
called the \emph{type $B$ ascent set} of $\pP$ and 
is denoted by $\Asc_B(\pP)$. Similarly, we define 
the \emph{type $B$ descent set} of $w \in \bB_n$ 
as $\Des_B(w) = \Des(w) \cup \{n\}$, if $w(n)$ is
positive, and $\Des_B(w) = \Des(w)$ otherwise. The 
complement of $\Des_B(w)$ in the set $[n]$ is 
called the \emph{type $B$ ascent set} of $w$ and 
is denoted by $\Asc_B(w)$. The sets $\Des_B(w)$ and
$\Des_B(\pP)$ depend only on the signed descent sets 
$\sDes_B(w)$ and $\sDes_B(\pP)$, respectively, and  
\cite[Proposition~5.1]{AAER17}, mentioned at the 
end of Section~\ref{sec:pre}, implies that $\Des_B(w) 
= \Des_B(Q^B(w))$ and $\Des_B(w^{-1}) = \Des_B
(\pP^B(w))$ for every $w \in \bB_n$.

\begin{corollary} \label{cor:Ath2}
Equations~(\ref{eq:Ath2+}) and~(\ref{eq:Ath2-}) are 
valid for the functions 

\begin{equation} \label{eq:corAth2+}
  \xi^+_{n,k} (\bx, \by) \ = \ 
  \sum_{(\lambda, \mu) \vdash n} 
  c^+_{(\lambda,\mu),k} \cdot s_\lambda (\bx) s_\mu 
  (\by) \ = \ \sum_{w} F_{\sDes(w)}(\bx, \by) 
\end{equation}

\medskip
\noindent
and

\begin{equation} \label{eq:corAth2-}
  \xi^{-}_{n,k} (\bx, \by) \ = \ 
  \sum_{(\lambda, \mu) \vdash n} 
  c^{-}_{(\lambda,\mu),k} \cdot s_\lambda (\bx) s_\mu 
  (\by) \ = \ \sum_{w} F_{\sDes(w)}(\bx, \by), 
\end{equation}

\medskip
\noindent
where $c^+_{(\lambda,\mu),k}$ ${\rm (}$respectively, 
$c^{-}_{(\lambda,\mu),k}{\rm )}$ stands for the number 
of bitableaux $\pP \in \SYT(\lambda,\mu)$ for which 
$\Asc_B(\pP) \in \stab([2,n])$ has $k$ elements and 
contains ${\rm (}$respectively, does not 
contain${\rm )}$ $n$ and where, similarly, $w \in 
\bB_n$ runs through all signed permutations for which 
$\Asc_B(w^{-1}) \in \stab([2,n])$ has $k$ 
elements and contains ${\rm (}$respectively, does not 
contain${\rm )}$ $n$.
\end{corollary}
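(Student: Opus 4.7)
The proof will mirror that of Corollary~\ref{cor:Gessel}, now applying Theorem~\ref{thm:main} to the Cohen--Macaulay $\bB_n$-poset $P = sB_n \cup \{\hat{1}\}$ obtained by adjoining a maximum element to the signed Boolean lattice. Since $sB_n$ is isomorphic to the face poset of the boundary complex of the $n$-dimensional cross-polytope, $P$ has rank $n+1$ and is Cohen--Macaulay over $\kk$, so the Rees products appearing in Theorem~\ref{thm:main} are Cohen--Macaulay by~\cite[Corollary~2]{BW05}. The plan is to focus on Equation~(\ref{eq:main2}), whose right-hand side decomposes as a sum $\Sigma_1 + \Sigma_2$, where $\Sigma_1$ runs over $S \in \stab([2,n-2])$ with $T := [n-1] \sm S$ (forcing $n \notin T$) and $\Sigma_2$ runs over $S \in \stab([2,n-1])$ with $T := [n] \sm S$ (forcing $1, n \in T$).

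The key input will be the signed analogue of Stanley's formula \cite[Section~6]{Sta82}: for any $T \subseteq [n]$,
\[
\ch_\bB(\beta_P(T)) \;=\; \sum_{(\lambda,\mu) \vdash n}\; \bigl|\{\pP \in \SYT(\lambda,\mu) : \Des_B(\pP) = T\}\bigr| \cdot s_\lambda(\bx)\, s_\mu(\by).
\]
After substituting this into~(\ref{eq:main2}) and switching from $T$ to the ascent set $\Asc_B(\pP) = [n]\sm T$, the stability of $S$ translates directly to stability of $\Asc_B(\pP)$ in $[2,n]$: in $\Sigma_1$ one has $\Asc_B(\pP) = \{n\} \cup S$ stable in $[2,n]$ and containing $n$, while in $\Sigma_2$ one has $\Asc_B(\pP) = S$ stable in $[2,n-1] \subseteq [2,n]$ and omitting $n$. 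Reindexing by $k = |\Asc_B(\pP)|$ turns the coefficient $t^{|S|+1}(1+t)^{n-2-2|S|}$ of $\Sigma_1$ into $t^k(1+t)^{n-2k}$ and the coefficient $t^{|S|}(1+t)^{n-1-2|S|}$ of $\Sigma_2$ into $t^k(1+t)^{n-1-2k}$, matching the shapes in~(\ref{eq:Ath2+}) and~(\ref{eq:Ath2-}) respectively. This yields the first equalities in~(\ref{eq:corAth2+}) and~(\ref{eq:corAth2-}). The second equalities then follow by expanding each $s_\lambda(\bx)\, s_\mu(\by)$ via~(\ref{eq:sxyFexpan}) and applying the type~$B$ Robinson--Schensted correspondence, whose property $\sDes(w^{-1}) = \sDes(\pP^B(w))$ (hence $\Asc_B(w^{-1}) = \Asc_B(\pP^B(w))$) converts the double sum over bitableau pairs of matching shape into a single sum over signed permutations $w \in \bB_n$ with the prescribed structure on $\Asc_B(w^{-1})$.

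To close the loop and obtain the closed-form left-hand sides of~(\ref{eq:Ath2+}) and~(\ref{eq:Ath2-}), I would combine Proposition~\ref{prop:Ath2}, which delivers $\ch_\bB(\Sigma_1 + \Sigma_2)$ in closed form, with a companion identity for $(sB_n \ast T_{t,n})_-$ derived analogously from Sundaram's formula~(\ref{eq:sun}), the orbit structure of the augmented Rees product, and the Boolean-case generating function~(\ref{eq:Ij(Bn)}); together, these two formulas should separate cleanly into the two rational expressions $\frac{E(\bx;tz) - tE(\bx;z)}{D}$ and $\frac{E(\bx;z) - E(\bx;tz)}{D}$ (with $D$ the common denominator in those equations). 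The hardest step will be this separation: Proposition~\ref{prop:Ath2} alone produces only the combined $\ch_\bB(\Sigma_1+\Sigma_2)$, and one must refine the Sundaram-based argument by tracking $\bB_n$-orbits of Rees product elements $(p,q)$ according to whether $p$ is a facet of $sB_n$ (i.e.\ $|p| = n$) or not, since this dichotomy is precisely what distinguishes the $n \in T$ and $n \notin T$ pieces of the rank-selected decomposition.
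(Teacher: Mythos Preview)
Your combinatorial analysis --- applying Equation~(\ref{eq:main2}) to $P^- = sB_n$, invoking Stanley's formula from~\cite[Section~6]{Sta82}, and translating the stability conditions on $S$ into conditions on $\Asc_B(\pP)$ --- is correct and matches the paper's argument exactly, as does your derivation of the second equalities via~(\ref{eq:sxyFexpan}) and the type~$B$ Robinson--Schensted correspondence. (One small point: Stanley's formula as stated in the paper gives $s_\lambda(\by) s_\mu(\bx)$, so a swap of $\bx$ and $\by$ is needed; the paper notes this explicitly.)

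Where you diverge from the paper is in the separation of the combined identity $\ch_\bB(\Sigma_1 + \Sigma_2) = \text{(closed form from Proposition~\ref{prop:Ath2})}$ into the two Equations~(\ref{eq:Ath2+}) and~(\ref{eq:Ath2-}). Your plan --- produce a second ``companion'' closed form via~(\ref{eq:main1}) or via a refined orbit count in the Sundaram argument --- is vague and would be laborious at best: Equation~(\ref{eq:main1}) yields a \emph{different} pair of sums (indexed by $\stab([n-1])$ and $\stab([n-2])$, not your $\Sigma_1, \Sigma_2$), so it does not directly isolate either piece. The paper instead uses a short symmetry argument: one checks that under $t \mapsto 1/t$, $z \mapsto tz$ the left-hand side of~(\ref{eq:Ath2+}) is invariant while that of~(\ref{eq:Ath2-}) is multiplied by $t$, so the coefficient of $z^n$ in each is a palindromic polynomial in $t$ with center of symmetry $n/2$ and $(n-1)/2$ respectively. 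Since $\Sigma_1$ and $\Sigma_2$ visibly have these same centers, and since any polynomial decomposes \emph{uniquely} as a sum of two palindromes with centers $n/2$ and $(n-1)/2$ (see~\cite[Section~5.1]{Ath17}), the single identity for $\Sigma_1 + \Sigma_2$ forces the two separate identities. This replaces your proposed extra computation entirely.
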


\begin{proof}
We apply the second part of Theorem~\ref{thm:main} 
for $P^- = sB_n$, thought of as a $\bB_n$-poset. 
The representations $\beta_P (S)$ for $S \subseteq 
[n]$ were computed in this case 
in~\cite[Theorem~6.4]{Sta82}, which implies that 

\[ \ch_\bB \left( \beta_{B_n} (S) \right) \ = \ 
   \sum_{(\lambda,\mu) \vdash n} c_{(\lambda,\mu),S} 
   \cdot s_\lambda (\by) s_\mu(\bx) \]

\smallskip
\noindent
for $S \subseteq [n]$, where $c_{(\lambda,\mu),S}$ 
is the number of standard Young bitableaux $\pP$ of 
shape $(\lambda,\mu)$ such that $\Des_B(\pP) = S$. 
Switching the roles of $\bx$ and $\by$ and 
combining this result with the second part of 
Theorem~\ref{thm:main} and 
Proposition~\ref{prop:Ath2} we get
  \begin{eqnarray} \nonumber 
    {\displaystyle \frac{(1-t) E(\bx; z)} 
    {E(\bx; tz) E(\by; tz) - tE(\bx; z) E(\by; z)}}
  & = & 
    \sum_{n \ge 0} z^n \,
    \sum_{k=0}^{\lfloor n/2 \rfloor} \
    \xi^+_{n,k}(\bx, \by) \, t^k (1 + t)^{n-2k}
    \ \ + \\
  & & \nonumber \\
  & & \sum_{n \ge 1} z^n
    \sum_{k=0}^{\lfloor (n-1)/2 \rfloor} 
    \xi^{-}_{n,k}(\bx, \by) \, t^k (1 + t)^{n-1-2k}, 
    \label{eq:sum}
  \end{eqnarray}

\noindent
where the $\xi^{\pm}_{n,k}(\bx, \by)$ are given 
by the first equalities in~(\ref{eq:corAth2+}) 
and~(\ref{eq:corAth2-}). We now note that the 
left-hand side of Equation~(\ref{eq:sum}) is equal
to the sum of the left-hand sides, say
$\Xi^+(\bx, \by, t; z)$ and $\Xi^{-}(\bx, \by, t; 
z)$, of Equations~(\ref{eq:Ath2+}) 
and~(\ref{eq:Ath2-}). Since, as one can readily 
verify, $\Xi^+(\bx, \by, t; z)$ is left invariant
under replacing $t$ with $1/t$ and $z$ with $tz$,
while $\Xi^{-}(\bx, \by, t; z)$ is multiplied by
$t$ after these substitutions, the coefficient of
$z^n$ in $\Xi^+(\bx, \by, t; z)$ (respectively,
$\Xi^{-}(\bx, \by, t; z)$) is a symmetric 
polynomial in $t$ with center of symmetry $n/2$ 
(respectively, $(n-1)/2$) for every $n \in \NN$.
Since the corresponding properties are clear for
the coefficient of $z^n$ in the two summands in
the right-hand side of Equation~(\ref{eq:sum}) 
and because of the uniqueness of the decomposition
of a polynomial $f(t)$ as a sum of two symmetric
polynomials with centers of symmetry $n/2$ and 
$(n-1)/2$ (see \cite[Section~5.1]{Ath17}), we 
conclude that~(\ref{eq:Ath2+}) 
and~(\ref{eq:Ath2-}) follow from the single 
equation~(\ref{eq:sum}). 

The second equalities in~(\ref{eq:corAth2+}) 
and~(\ref{eq:corAth2-}) follow by expanding 
$s_\lambda(\bx) s_\mu(\by)$ according to 
Equation~(\ref{eq:sxyFexpan}) and then using the 
Robinson--Schensted correspondence of type $B$ and 
its properties $\sDes(w) = \sDes(Q^B(w))$ and 
$\Des_B(w^{-1}) = \Des_B(\pP^B(w))$, exactly as 
in the proof of Corollary~\ref{cor:Gessel}.
\end{proof}

\begin{example} \rm
The coefficient of $z^2$ in the left-hand side 
of Equations~(\ref{eq:Ath2+}) and~(\ref{eq:Ath2-}) 
equals

\smallskip
\begin{itemize}
\itemsep=3pt
\item[$\bullet$]
$e_1(\bx) e_1(\by) \, t + e_2(\by) \, t = s_{(1)}
(\bx) s_{(1)}(\by) \, t + s_{(1,1)}(\by) \, t$, and

\item[$\bullet$]
$e_2(\bx) \, (1 + t) = s_{(1,1)}(\bx) \, (1 + t)$,
\end{itemize}

\smallskip
\noindent
respectively and hence $\xi^+_{2,0} (\bx, \by) = 
0$, $\xi^+_{2,1} (\bx, \by) = s_{(1)}(\bx) s_{(1)}(\by) 
+ s_{(1,1)}(\by)$ and $\xi^{-}_{2,0} (\bx, \by) = 
s_{(1,1)}(\bx)$, in agreement with 
Corollary~\ref{cor:Ath2}.
\qed
\end{example}

\begin{proposition} \label{prop:Ath1}
For the $\bB_n$-poset $sB_n$ we have

\begin{equation} \label{eq:prop1}
  1 \, + \, \sum_{n \ge 1} \, \ch_\bB   
  \left( \widetilde{H}_{n-1} 
  ((sB_n \ast T_{t,n})_{-}; \CC) \right) z^n \ = \ 
  \frac{(1-t) E(\by; z) E(\bx; tz) E(\by; tz)} 
  {E(\bx; tz) E(\by; tz) - tE(\bx; z) E(\by; z)}. 
\end{equation}
\end{proposition}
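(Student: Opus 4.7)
The plan is to mirror the proof of Proposition~\ref{prop:Ath2}. I would apply Sundaram's formula~(\ref{eq:sun}) to the Cohen--Macaulay $\bB_n$-poset $\hat P$ obtained from $sB_n \ast T_{t,n}$ by adjoining a single maximum element $\hat 1$; note that $sB_n \ast T_{t,n}$ already possesses the minimum $(\varnothing, \mathrm{root})$, so, unlike in Proposition~\ref{prop:Ath2}, no bottom element needs to be added, and the proper part of $\hat P$ equals $(sB_n \ast T_{t,n})_{-}$.

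The $\bB_n$-orbit structure is then analyzed exactly as in Proposition~\ref{prop:Ath2}: for each pair $(k, j)$ with $0 \le j \le k \le n$ and $k \ge 1$ there are $t^j$ orbits of elements $(p, q)$ with $\rho_{sB_n}(p) = k$ and $\rho_{T_{t,n}}(q) = j$; the stabilizer of each such element is isomorphic to $\fS_k \times \bB_{n-k}$ (with $\bB_{n-k}$ acting trivially on the interval below) and the open interval $(\hat{0}, (p, q))$ is isomorphic to $I_j(B_k)$ as an $\fS_k$-poset. The crucial distinction from Proposition~\ref{prop:Ath2} is that here $j$ runs all the way up to $k$ rather than only to $k-1$, because $T_{t,n}$ has rank $n$ instead of $n-1$; the additional $j = k$ contributions are what produce the extra factor $E(\bx; tz) E(\by; tz)$ in the right-hand side of~(\ref{eq:prop1}).

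Applying $\ch_\bB$ to the resulting Sundaram identity and invoking the transitivity of induction together with the multiplicativity of $\ch_\bB$ exactly as in the proof of Proposition~\ref{prop:Ath2}, the left-hand side of~(\ref{eq:prop1}) takes the form
\[
H(\bx; -z) \cdot F(\bx, \by; t, z), \qquad F(\bx, \by; t, z) := 1 + \sum_{n \ge 1} z^n \sum_{j=0}^{n} t^j \, \ch(\widetilde{H}_{n-2}(I_j(B_n); \CC))(\bx, \by).
\]
To identify the generating function $F$, I would run the same Sundaram argument in the $\fS_n$-setting with $B_n \ast T_{t,n}$ in place of $sB_n \ast T_{t,n}$, and combine it with the identity
\[
1 + \sum_{n \ge 1} \ch(\widetilde{H}_{n-1}((B_n \ast T_{t,n})_{-}; \CC)) z^n = \frac{(1-t) E(\bx; tz)}{E(\bx; tz) - t E(\bx; z)}
\]
recorded at the end of the proof of Corollary~\ref{cor:Gessel}; this yields $F(\bx; t, z) = (1-t) E(\bx; z) E(\bx; tz) / (E(\bx; tz) - t E(\bx; z))$. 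Substituting $(\bx, \by)$ for $\bx$ and simplifying via $E(\bx, \by; w) = E(\bx; w) E(\by; w)$ and $E(\bx; z) H(\bx; -z) = 1$ recovers~(\ref{eq:prop1}). The main obstacle is the careful handling of the boundary $j = k$ case in the orbit analysis, which involves open intervals of a new type not appearing in Proposition~\ref{prop:Ath2}.
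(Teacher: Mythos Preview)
Your overall strategy is sound and does lead to~(\ref{eq:prop1}), but it is not the route the paper takes. The paper does not rerun the Sundaram argument of Proposition~\ref{prop:Ath2}; instead it observes that $(sB_0, sB_1,\dots,sB_n)$ is a $(\bB_0 \times \fS_n, \bB_1 \times \fS_{n-1},\dots,\bB_n \times \fS_0)$-uniform sequence and applies the Shareshian--Wachs formula~(\ref{eq:uniform}). Taking $\ch_\bB$ of that identity produces the recursion $\sum_{k=0}^n [k+1]_t \, h_k(\bx,\by)\, L_{n-k}(\bx,\by;t) = -h_n(\bx)$ for the Lefschetz characters $L_m$, which is then solved by elementary generating-function manipulation. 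No lower intervals need to be identified at all.

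Your proposal, by contrast, carries out the interval analysis explicitly, and there is a mislabeling you should correct. The open interval $((\varnothing,\mathrm{root}),(p,q))$ in $sB_n \ast T_{t,n}$ is \emph{not} isomorphic to $I_j(B_k)$: because $\varnothing$ belongs to $sB_n$, the rank of $p'$ in the Rees product is $|p'|$ rather than $|p'|-1$, so the constraint reads $|p'| \ge \rho(q')$ instead of $|p'| - 1 \ge \rho(q')$ and the resulting interval is strictly larger than $I_j(B_k)$ (indeed $I_k(B_k)$ is not even defined, since $C_k$ has top element $k-1$). This does not, however, spoil your argument: the same (correctly described) intervals arise when you apply Sundaram to $B_n \ast T_{t,n}$ with its natural minimum, so your bootstrap from the type-$A$ identity recorded in the proof of Corollary~\ref{cor:Gessel} recovers the correct $F$ regardless of how the intervals are named. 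In effect you trade the packaged $G$-uniform machinery for a direct comparison of two Sundaram computations; the paper's route is cleaner because~(\ref{eq:uniform}) has already absorbed that work.
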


\smallskip
\begin{proof}
Following the reasoning in the proof 
of~\cite[Equation~(3.3)]{SW09}, we set 

\[ L_n (\bx, \by; t) \ := \ \ch_\bB \left(
   L((sB_n \ast T_{t,n})_{-}; \bB_n) \right), \]

\bigskip
\noindent
where $L(P; G)$ denotes the Lefschetz character of 
the $G$-poset $P$ over $\CC$ (see 
Section~\ref{sec:pre}). Since $(sB_n \ast 
T_{t,n})_{-}$ is Cohen--Macaulay over $\CC$ of rank 
$n-1$, we have 

\[ \ch_\bB \left( \widetilde{H}_{n-1} ((sB_n \ast 
   T_{t,n})_{-}; \CC) \right) \ = \ (-1)^{n-1} \, 
   L_n(\bx, \by; t). \]

\medskip
\noindent
Thus, the left-hand side of~(\ref{eq:prop1}) is 
equal to $-\sum_{n \ge 0} L_n(\bx, \by; t) (-z)^n$. 
The sequence of posets $(sB_0, sB_1,\dots,sB_n)$ can 
easily be verified to be $(\bB_0 \times \fS_n, \bB_1 
\times \fS_{n-1},\dots,\bB_n \times \fS_0)$-uniform 
(see Section~\ref{sec:pre}). Moreover, there
is a single $\bB_n$-orbit of elements of $sB_n$ of
rank $k$ for each $k \in \{0, 1,\dots,n\}$.
Thus, applying~(\ref{eq:uniform}) to this 
sequence gives
\[ 1_{\bB_n} \oplus \, \bigoplus_{k=0}^n \, [k + 1]_t 
   \, L((sB_{n-k} \ast T_{t,n-k})_{-}; \bB_{n-k} 
   \times \fS_k) \uparrow^{\bB_n}_{\bB_{n-k} \times 
   \fS_k} \ \, \cong_{\bB_n} \ 0. \]

\smallskip
\noindent
Applying the characteristic map $\ch_\bB$, as in 
the proof of Proposition~\ref{prop:Ath2}, gives
\[ \sum_{k=0}^n \, [k + 1]_t \, h_k(\bx, \by) \, 
   L_{n-k} (\bx, \by; t) \ = \ - h_n (\bx). \]

\smallskip
\noindent
Standard manipulation with generating functions, 
as in the proof of~\cite[Equation~(3.3)]{SW09}, 
results in the formula
\[ \sum_{n \ge 0} L_n(\bx, \by; t) z^n \ = \ - \
   \frac{H(\bx; z)}{\sum_{n \ge 0} \, [n+1]_t \,
   h_n(\bx, \by) z^n} \ = \ - \ \frac{(1-t) H(\bx; z)}
   {H(\bx, \by; z) - t H(\bx, \by; tz)}. \]

\smallskip
\noindent
The proof now follows by switching $z$ to $-z$ and 
using the identities $E(\bx; z) H(\bx; -z) = 1$ and 
$E(\bx, \by; z) = E(\bx; z) E(\by; z)$.
\end{proof}

\begin{corollary} \label{cor:Ath1}
Equations~(\ref{eq:Ath1+}) and~(\ref{eq:Ath1-}) 
are valid for the functions 

\begin{equation} \label{eq:corAth1+}
  \gamma^+_{n,k} (\bx, \by) \ = \ 
  \sum_{(\lambda, \mu) \vdash n} 
  d^+_{(\lambda,\mu),k} \cdot s_\lambda (\bx) s_\mu 
  (\by) \ = \ \sum_{w} F_{\sDes(w)}(\bx, \by) 
\end{equation}

\medskip
\noindent
and

\begin{equation} \label{eq:corAth1-}
  \gamma^{-}_{n,k} (\bx, \by) \ = \ 
  \sum_{(\lambda, \mu) \vdash n} 
  d^{-}_{(\lambda,\mu),k} \cdot s_\lambda (\bx) s_\mu 
  (\by) \ = \ \sum_{w} F_{\sDes(w)}(\bx, \by), 
\end{equation}

\medskip
\noindent
where $d^+_{(\lambda,\mu),k}$ ${\rm (}$respectively, 
$d^{-}_{(\lambda,\mu),k}{\rm )}$ is the 
number of bitableaux $\pP \in \SYT(\lambda,\mu)$ for 
which $\Asc_B(\pP) \in \stab([n])$ has $k$ 
elements and does not contain ${\rm (}$respectively,
contains${\rm )}$ $n$ and, similarly, $w \in \bB_n$ 
runs through all signed permutations for which 
$\Asc_B(w^{-1}) \in \stab([n])$ has $k$ elements 
and does not contain ${\rm (}$respectively,  
contains${\rm )}$ $n$.
\end{corollary}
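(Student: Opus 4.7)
The plan is to parallel the proof of Corollary~\ref{cor:Ath2}, substituting the first part of Theorem~\ref{thm:main} (Equation~(\ref{eq:main1})) together with Proposition~\ref{prop:Ath1} for the second part and Proposition~\ref{prop:Ath2} respectively. I apply~(\ref{eq:main1}) to the $\bB_n$-poset $P$ with $P^- = sB_n$, take the Frobenius characteristic $\ch_\bB$ of both sides, and sum over $n \geq 1$. On the left, Cohen--Macaulayness of $sB_n$ together with Proposition~\ref{prop:Ath1} identifies the resulting generating function as the displayed rational function. On the right, I substitute Stanley's formula \cite[Theorem~6.4]{Sta82}, which expresses $\ch_\bB(\beta_P(S))$ as a sum of products $s_\lambda(\by) s_\mu(\bx)$ indexed by bitableaux with $\Des_B(\pP) = S$, and then switch the roles of $\bx$ and $\by$, exactly as in the proof of Corollary~\ref{cor:Ath2}.

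After this substitution the right-hand side becomes a sum of two contributions $A(\bx,\by,t;z) + B(\bx,\by,t;z)$, one from each sum in~(\ref{eq:main1}). The coefficient of $z^n$ in $A$ is a palindromic polynomial in $t$ with center of symmetry $n/2$, while that in $B$ is a palindrome with center $(n+1)/2$, as is evident from the $t$-factors $t^{|S|}(1+t)^{n-2|S|}$ and $t^{|S|+1}(1+t)^{n-1-2|S|}$ in~(\ref{eq:main1}). The same symmetry properties hold for the coefficients of $z^n$ in the left-hand sides of~(\ref{eq:Ath1+}) and~(\ref{eq:Ath1-}), as one checks by the substitution $t \mapsto 1/t$, $z \mapsto tz$. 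A short computation shows that the sum of these two left-hand sides simplifies to $(1-t)\, E(\bx; z) E(\bx; tz) E(\by; tz) / (E(\bx; tz) E(\by; tz) - t E(\bx; z) E(\by; z))$, which is exactly the rational function delivered by Proposition~\ref{prop:Ath1} after the $\bx \leftrightarrow \by$ swap. By uniqueness of the decomposition of a polynomial in $t$ as a sum of two palindromes with distinct centers $n/2$ and $(n+1)/2$ (cf.~\cite[Section~5.1]{Ath17}), the identity $A + B = \text{LHS}$ splits into the two identities (\ref{eq:Ath1+}) and (\ref{eq:Ath1-}).

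It then remains to read off the combinatorics. For $S \in \stab([n-1])$ in the first sum, bitableaux $\pP$ with $\Des_B(\pP) = [n] \setminus S$ are precisely those with $\Asc_B(\pP) = S$, a stable $k$-element subset of $[n]$ not containing $n$, yielding the count $d^+_{(\lambda,\mu),k}$ with $k = |S|$. For $S \in \stab([n-2])$ in the second sum, $\Des_B(\pP) = [n-1] \setminus S$ translates to $\Asc_B(\pP) = S \cup \{n\}$, a stable subset of $[n]$ containing $n$ with $k = |S|+1$ elements, yielding $d^-_{(\lambda,\mu),k}$. The second equalities in (\ref{eq:corAth1+}) and (\ref{eq:corAth1-}) follow by expanding $s_\lambda(\bx) s_\mu(\by)$ via~(\ref{eq:sxyFexpan}) and invoking the type~$B$ Robinson--Schensted correspondence with its properties $\sDes(w) = \sDes(Q^B(w))$ and $\Des_B(w^{-1}) = \Des_B(\pP^B(w))$, exactly as at the end of the proof of Corollary~\ref{cor:Ath2}. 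The main obstacle is the careful bookkeeping of the $\bx \leftrightarrow \by$ switch between Stanley's formula and Proposition~\ref{prop:Ath1}, and the verification that the two left-hand sides sum, after this swap, to the correct rational function; the rest is a direct adaptation of the argument for Corollary~\ref{cor:Ath2}.
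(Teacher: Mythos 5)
Your proposal is correct and takes essentially the same route as the paper, whose own proof of Corollary~\ref{cor:Ath1} consists precisely of the instruction to repeat the argument of Corollary~\ref{cor:Ath2} with the first part of Theorem~\ref{thm:main} and Proposition~\ref{prop:Ath1} in place of the second part and Proposition~\ref{prop:Ath2}. The details you supply --- the $\bx \leftrightarrow \by$ swap against Stanley's formula, the check that the left-hand sides of (\ref{eq:Ath1+}) and (\ref{eq:Ath1-}) sum to $(1-t)\,E(\bx;z)E(\bx;tz)E(\by;tz)\big/\bigl(E(\bx;tz)E(\by;tz)-tE(\bx;z)E(\by;z)\bigr)$, the uniqueness of the decomposition into palindromes with centers $n/2$ and $(n+1)/2$, and the translation of $\Des_B(\pP)=[n]\sm S$ and $\Des_B(\pP)=[n-1]\sm S$ into the conditions on $\Asc_B$ defining $d^{\pm}_{(\lambda,\mu),k}$ --- are exactly those the paper leaves implicit, and they are all accurate.
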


\begin{proof}
This statement follows by the same reasoning as in 
the proof of Corollary~\ref{cor:Ath2}, provided one 
appeals to the first part of Theorem~\ref{thm:main} 
and Proposition~\ref{prop:Ath1} instead. 
\end{proof}

\begin{example} \rm
The coefficient of $z^2$ in the left-hand side 
of Equations~(\ref{eq:Ath1+}) and~(\ref{eq:Ath1-}) 
equals

\smallskip
\begin{itemize}
\itemsep=3pt
\item[$\bullet$]
$e_2(\bx) \, (1 + t + t^2) + e_1(\bx)^2 \, t + 
e_1(\bx) e_1(\by) \, t = s_{(1,1)}(\bx) \, (1 + t)^2
+ s_{(2)} (\bx) \, t + s_{(1)}(\bx) s_{(1)}(\by) \, 
t$,

\item[$\bullet$]
$e_1(\bx) e_1(\by) \, (t + t^2) + e_2(\by) \, 
(t + t^2) = s_{(1)} (\bx) s_{(1)}(\by) \, t(1+t) + 
s_{(1,1)}(\by) \, t(1+t)$, 
\end{itemize}

\smallskip
\noindent
respectively and hence we have $\gamma^+_{2,0} (\bx, 
\by) = s_{(1,1)}(\bx)$, $\gamma^+_{2,1} (\bx, \by) = 
s_{(2)}(\bx) + s_{(1)}(\bx) s_{(1)}(\by)$ and 
$\gamma^{-}_{2,1} (\bx, \by) = s_{(1)}(\bx) s_{(1)}
(\by) + s_{(2)}(\by)$, in agreement with 
Corollary~\ref{cor:Ath1}.
\qed
\end{example}

\section{An instance of the local equivariant Gal 
phenomenon} \label{sec:face}

This section uses Equation~(\ref{eq:Ath2+}) to 
verify an equivariant analogue of Gal's 
conjecture~\cite{Ga05} for the local face module 
of a certain triangulation of the simplex with 
interesting combinatorial properties. Background 
and any undefined terminology on simplicial 
complexes can be found in~\cite{StaCCA}.

To explain the setup, let $V_n = \{ \varepsilon_1, 
\varepsilon_2,\dots,\varepsilon_n \}$ be the set
of unit coordinate vectors in $\RR^n$ and 
$\Sigma_n$ be the geometric simplex on the vertex 
set $V_n$. Consider a triangulation $\Gamma$ of 
$\Sigma_n$ (meaning, a geometric simplicial 
complex which subdivides $\Sigma_n$) with 
vertex set $V_\Gamma$ and the polynomial ring 
$S = \CC[x_v: v \in V_\Gamma]$ in indeterminates 
which are in one-to-one correspondence with the 
vertices of $\Gamma$. The \emph{face ring} 
\cite[Chapter~II]{StaCCA} of $\Gamma$ is defined 
as the quotient ring $\CC[\Gamma] = S/I_\Gamma$,
where $I_\Gamma$ is the ideal of $S$ generated by 
the square-free monomials which correspond to the
non-faces of $\Gamma$. Following 
\cite[p.~824]{Sta92}, we consider the linear forms
\begin{equation} \label{eq:lsop}
   \theta_i \ = \ \sum_{v \in V_\Gamma} \langle 
   v, \varepsilon_i \rangle x_v 
\end{equation}
for $i \in [n]$, where $\langle \ , \, \rangle$ 
is the standard inner product on $\RR^n$, and 
denote by $\Theta$ the ideal in $\CC[\Gamma]$ 
generated by $\theta_1,\theta_2,\dots,\theta_n$. 
This sequence is a special linear system of 
parameters for $\CC[\Gamma]$, in the sense of 
\cite[Definition~4.2]{Sta92}. As a result, the 
quotient ring $\CC(\Gamma) = \CC[\Gamma] / \Theta$ 
is a finite dimensional, graded $\CC$-vector space 
and so is the \emph{local face module} $L_{V_n} 
(\Gamma)$, defined 
\cite[Definition~4.5]{Sta92} as the image in 
$\CC(\Gamma)$ of the ideal of $\CC[\Gamma]$ 
generated by the square-free monomials which 
correspond to the faces of $\Gamma$ lying in the
interior of $\Sigma_n$. The Hilbert polynomials 
$\sum_{i=0}^n \dim_\CC (\CC(\Gamma))_i t^i$ and
$\sum_{i=0}^n \dim_\CC (L_{V_n}(\Gamma))_i t^i$ 
of $\CC(\Gamma)$ and $L_{V_n} (\Gamma)$ are two 
important enumerative invariants of $\Gamma$,
namely the $h$-polynomial \cite[Section~II.2]
{StaCCA} and the local $h$-polynomial 
\cite[Section~2]{Sta92} \cite[Section~III.10]
{StaCCA}, respectively.  

Suppose that $G$ is a subgroup of the 
automorphism group $\fS_n$ of $\Sigma_n$ which
acts simplicially on $\Gamma$. Then, $G$ acts on 
the polynomial ring $S$ and (as discussed on 
\cite[p.~250]{Ste94}) leaves the $\CC$-linear 
span of $\theta_1, \theta_2,\dots,\theta_n$ 
invariant. Therefore, $G$ acts on the 
graded $\CC$-vector spaces $\CC(\Gamma)$ 
and $L_{V_n} (\Gamma)$ as well and the 
polynomials $\sum_{i=0}^n (\CC(\Gamma))_i t^i$ 
and $\sum_{i=0}^n (L_{V_n}(\Gamma))_i t^i$, whose 
coefficients lie in the representation ring of
$G$, are equivariant generalizations of the 
$h$-polynomial and local $h$-polynomial of 
$\Gamma$, respectively. The pair $(\Gamma, G)$
is said (see also \cite[Section~5.2]{Ath17}) to 
satisfy the \emph{local equivariant Gal 
phenomenon} if 
\begin{equation} \label{eq:Gal}
  \sum_{i=0}^n \, (L_{V_n}(\Gamma))_i \, t^i \ = \ 
  \sum_{k=0}^{\lfloor n/2 \rfloor} M_k \, t^k 
  (1+t)^{n-2k} 
\end{equation}
for some non-virtual $G$-representations $M_k$.
This is an analogue for local face modules of 
the equivariant Gal phenomenon, formulated by 
Shareshian and Wachs~\cite[Section~5]{SW17} for
group actions on (flag) triangulations of spheres
as an equivariant version of Gal's conjecture
\cite[Conjecture~2.1.7]{Ga05}. For trivial
actions on flag triangulations of simplices, 
the validity of the local equivariant Gal 
phenomenon was conjectured in~\cite{Ath12} and 
has been verified in many special cases; see 
\cite[Section~4]{Ath16a} \cite[Section~3.2]{Ath17}
and references therein. 

Although it would be too optimistic to expect
that the local equivariant Gal phenomenon holds 
for all group actions on flag triangulations of 
$\Sigma_n$, the case $G = \fS_n$ deserves special 
attention. We then use the notation
\begin{eqnarray*}
\ch \left( \CC(\Gamma), t \right) & := & 
\sum_{i=0}^n \, \ch \left( \CC(\Gamma) \right)_i 
  t^i, \\
\ch \left( L_{V_n}(\Gamma), t \right) & := & 
\sum_{i=0}^n \, \ch \left( L_{V_n}(\Gamma) \right)_i 
  t^i. 
\end{eqnarray*}
For the (first) barycentric subdivision of 
$\Sigma_n$ we have the following result of Stanley.
\begin{proposition} \label{prop:sdn}
{\rm (\cite[Proposition~4.20]{Sta92})} 
For the $\fS_n$-action on the barycentric 
subdivision $\Gamma_n$ of the simplex $\Sigma_n$, 
we have
\begin{equation} \label{eq:Lsdn}
  1 \, + \, \sum_{n \ge 1} \, \ch 
  \left( L_{V_n}(\Gamma_n), t \right) z^n \ = \ 
  \frac{1 - t}{H(\bx; tz) - tH(\bx; z)}.
\end{equation}
\end{proposition}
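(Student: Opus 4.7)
The plan is to reduce Proposition~\ref{prop:sdn} to Gessel's identity~(\ref{eq:Ge2}), already proved in Corollary~\ref{cor:Gessel} via Theorem~\ref{thm:main}, by means of the involution on symmetric functions induced by the sign character of $\fS_n$. The crucial step is to establish the $\fS_n$-equivariant isomorphism of graded representations
\[
L_{V_n}(\Gamma_n) \ \cong_{\fS_n} \ \widetilde{H}_{n-1} \bigl( (B_n \sm \{\varnothing\}) \ast T_{t,n-1} ; \CC \bigr) \, \otimes \, \mathrm{sgn}_{\fS_n} ,
\]
in which the grading on the left is by polynomial degree in $\CC(\Gamma_n)$ and, on the right, the summand of ``degree $k$'' consists of the cycles whose second coordinates lie at rank $k$ in $T_{t,n-1}$. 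This identification is essentially the content of~\cite[Proposition~4.20]{Sta92}, and it is the main obstacle one would have to overcome.

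Granted this isomorphism, I would apply the characteristic map and use the standard fact that tensoring an $\fS_n$-representation with the sign character corresponds, under $\ch$, to the involution $\omega$ on symmetric functions, which swaps $e_n \leftrightarrow h_n$ and hence $E(\bx;z) \leftrightarrow H(\bx;z)$. Summing over $n$ and invoking the intermediate generating-function identity established in the proof of Corollary~\ref{cor:Gessel}, namely
\[
1 \,+\, \sum_{n \ge 2} \ch \bigl( \widetilde{H}_{n-1}((B_n \sm \{\varnothing\}) \ast T_{t,n-1} ; \CC) \bigr) \, z^n \ = \ \frac{1-t}{E(\bx; tz) - t E(\bx; z)} ,
\]
and applying $\omega$ to both sides converts $E$ to $H$ and yields precisely the right-hand side of~(\ref{eq:Lsdn}).

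To actually produce the sign-twisted isomorphism, my plan would be to exhibit matching $\fS_n$-equivariant bases on the two sides: on the local-face-module side, Stanley's standard-monomial basis of $L_{V_n}(\Gamma_n)$ indexed by interior faces of $\Gamma_n$ (equivalently, by flags of nonempty subsets of $[n]$ with top element $[n]$); on the Rees-product side, a basis of the top homology produced by a lexicographic shelling in the style of~\cite{LSW12}, indexed by certain ``increasing'' maximal chains. The correspondence between these bases should match orientations up to the sign character, which accounts for the twist. A more self-contained but technically heavier alternative would be to invert Stanley's local $h$-formula $h(\Delta, t) = \sum_F \ell_{V_F}(\Delta|_F, t)$ equivariantly to obtain an identity of virtual $\fS_n$-representations, and then invoke Cohen--Macaulayness of $\Gamma_n$ to promote this to a genuine-representation identity; however, the sign-twist approach above fits more naturally with the Rees-product methodology of this paper.
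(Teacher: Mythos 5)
There is a genuine gap, and it sits exactly where you locate ``the main obstacle''. First, note that the paper does not prove Proposition~\ref{prop:sdn} at all: it quotes it from \cite[Proposition~4.20]{Sta92}, and Stanley's Proposition~4.20 \emph{is} the generating-function identity~(\ref{eq:Lsdn}) (Stanley's 1992 paper contains no Rees products). Your justification of the crucial sign-twisted isomorphism --- that it ``is essentially the content of~\cite[Proposition~4.20]{Sta92}'' --- is therefore circular: given~(\ref{eq:Lsdn}) together with the intermediate identity from the proof of Corollary~\ref{cor:Gessel}, the isomorphism follows by comparing characters, so it cannot serve as an input to a proof of~(\ref{eq:Lsdn}). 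Moreover, the isomorphism cannot be taken at face value as you state it: on the Rees-product side $t$ is the branching parameter of the tree $T_{t,n-1}$, not a degree, and the top homology carries no canonical grading ``by ranks of second coordinates'' (homology classes are linear combinations of maximal chains, which mix these ranks). What is true --- as a \emph{consequence} of~(\ref{eq:Lsdn}) --- is the multiplicity statement $\widetilde{H}_{n-1}((B_n \sm \{\varnothing\}) \ast T_{t,n-1}; \CC) \otimes \mathrm{sgn} \, \cong_{\fS_n} \, \bigoplus_k \left( L_{V_n}(\Gamma_n) \right)_k^{\oplus t^k}$ for each positive integer $t$; but this is equivalent to the character identity you are trying to prove. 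Your basis-matching plan does not close the circle: the falling-chain basis produced by a lexicographic shelling as in~\cite{LSW12} is not $\fS_n$-stable, a bijection between bases yields an isomorphism of representations only if it intertwines the actions on the nose (and $L_{V_n}(\Gamma_n)$ is not a permutation module, so ``matching orientations up to the sign character'' is precisely the hard content, not bookkeeping), and there is no standard-monomial basis of $L_{V_n}(\Gamma_n)$ indexed by interior faces of $\Gamma_n$: already for $n=2$ the local face module has dimension $1$ (the number of derangements in $\fS_2$), while $\Gamma_2$ has three interior faces.

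The workable route is the one you relegate to a final remark as the ``technically heavier alternative''; it is Stanley's actual argument, and it is exactly the argument this paper runs for the triangulation $K_n$ in Proposition~\ref{prop:Kn}. One first computes the graded character of the full quotient ring, $1 + \sum_{n \ge 1} \ch(\CC(\Gamma_n), t) z^n = \frac{(1-t) H(\bx; z)}{H(\bx; tz) - tH(\bx; z)}$, which follows from the fixed-point formula~(\ref{eq:ste}), since $(\Gamma_n)^w$ is isomorphic to $\Gamma_{c(w)}$ (fixed faces are chains of unions of cycles of $w$) and $h(\Gamma_n, t)$ is the Eulerian polynomial --- this is the computation of \cite[Section~6]{Ste94}. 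One then applies the equivariant inclusion--exclusion over boundary faces from the proof of \cite[Proposition~4.20]{Sta92}, namely $1 + \sum_{n \ge 1} \ch(L_{V_n}(\Gamma_n), t) z^n = E(\bx; -z) \left( 1 + \sum_{n \ge 1} \ch(\CC(\Gamma_n), t) z^n \right)$, and concludes with $E(\bx; -z) H(\bx; z) = 1$. For what it is worth, your reduction step is sound as far as it goes: applying $\omega$ does convert the intermediate identity of Corollary~\ref{cor:Gessel} into the right-hand side of~(\ref{eq:Lsdn}), and agreement for all positive integers $t$ does give the polynomial identity in $t$; the failure is entirely in the unproven (and, as justified, circular) equivariant isomorphism.
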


Combining this result with Gessel's identity 
(\ref{eq:Ge2}) gives 
\[ \ch \left( L_{V_n}(\Gamma_n), t \right) \ = \ 
   \sum_{k=0}^{\lfloor (n-2)/2 \rfloor} \omega \,
   \xi_{n,k}(\bx) \, t^{k+1} (1 + t)^{n-2k-2}, \]
where $\omega$ is the standard involution on 
symmetric functions exchanging $e_\lambda(\bx)$ 
and $h_\lambda(\bx)$ for every $\lambda$, whence 
it follows that $(\Gamma_n, \fS_n)$ satisfies the 
local equivariant Gal phenomenon for every $n$.

The combinatorics of the barycentric subdivision 
$\Gamma_n$ is related to the symmetric group $\fS_n$.
We now consider a triangulation $K_n$ of the simplex 
$\Sigma_n$, studied in \cite[Chapter~3]{Sav13} (see 
also \cite[Remark~4.5]{Ath16a} 
\cite[Section~3.3]{Ath17}) and shown on the right of 
Figure~\ref{fig:K3} for $n=3$, the combinatorics of 
which is related to the hyperoctahedral group $\bB_n$.
The triangulation $K_n$ can be defined as the 
barycentric subdivision of the standard cubical 
subdivision of $\Sigma_n$, shown on the left of 
Figure~\ref{fig:K3} for $n=3$, whose faces are in 
inclusion-preserving bijection with the nonempty 
closed intervals in the truncated Boolean lattice 
$B_n \sm \{ \varnothing \}$. Thus, the faces of 
$K_n$ correspond bijectively to chains of nonempty 
closed intervals in $B_n \sm \{ \varnothing \}$ 
and $\fS_n$ acts simplicially on $K_n$ in the 
obvious way. As a simplicial complex, $K_n$ can be 
thought of as a `half Coxeter complex' for $\bB_n$.

  \begin{figure}
  \epsfysize = 1.4 in 
  \centerline{\epsffile{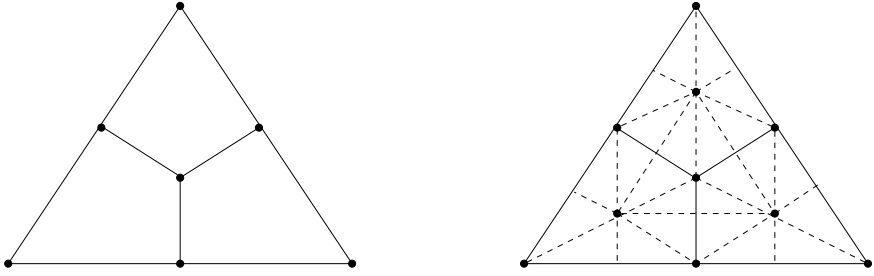}}
  \caption{The triangulation $K_3$}
  \label{fig:K3}
  \end{figure}

\begin{proposition} \label{prop:Kn}
For the $\fS_n$-action on $K_n$ we have
\begin{equation} \label{eq:Kn}
  1 \, + \, \sum_{n \ge 1} \, \ch 
  \left( \CC(K_n), t \right) z^n \ = \ \frac{H(\bx; z)
  \left( H(\bx; tz) - tH(\bx; z) \right)} 
        {H(\bx; tz)^2 - tH(\bx; z)^2}
\end{equation}
and
\begin{equation} \label{eq:localKn}
  1 \, + \, \sum_{n \ge 1} \, \ch 
  \left( L_{V_n}(K_n), t \right) z^n \ = \ 
  \frac{H(\bx; tz) - tH(\bx; z)} 
        {H(\bx; tz)^2 - tH(\bx; z)^2}.
\end{equation}
Moreover, the pair $(K_n, \fS_n)$ satisfies the 
local equivariant Gal phenomenon for every $n$.
\end{proposition}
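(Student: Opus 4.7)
The plan is to first reduce (\ref{eq:Kn}) to (\ref{eq:localKn}), then establish (\ref{eq:localKn}) via a poset-theoretic realization of $K_n$ and an application of Theorem~\ref{thm:main}, and finally deduce the Gal phenomenon from Equation~(\ref{eq:Ath2+}). For the reduction, I use the standard identity $h(\Gamma, t) = \sum_{W \subseteq V} \ell_W(\Gamma_W, t)$ (with $\ell_\varnothing := 1$), valid for any triangulation $\Gamma$ of $\Sigma_n$, where $\Gamma_W$ denotes the restriction of $\Gamma$ to the face of $\Sigma_n$ spanned by $W$. Since $(K_n)_W$ is $\fS_W$-equivariantly isomorphic to $K_{|W|}$ --- both are the order complex of the poset of nonempty intervals in $2^W \sm \{\varnothing\}$ --- the equivariant version, together with transitivity of the $\fS_n$-action on $k$-subsets and the fact that induction from $\fS_k \times \fS_{n-k}$ (with trivial action on the second factor) corresponds to multiplication by $h_{n-k}(\bx)$, yields $\ch(\CC(K_n), t) = \sum_{k=0}^n h_{n-k}(\bx) \ch(L_{V_k}(K_k), t)$. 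At the level of generating series in $z$ this says that the left-hand side of (\ref{eq:Kn}) equals $H(\bx; z)$ times that of (\ref{eq:localKn}), and an immediate algebraic check gives the same ratio for the right-hand sides, so (\ref{eq:Kn}) follows from (\ref{eq:localKn}).

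To prove (\ref{eq:localKn}), I realize $K_n$ as the order complex of a subposet of the signed Boolean algebra $sB_n$. Encoding each nonempty interval $[A, B]$ in $B_n \sm \{\varnothing\}$ as the signed set $\{+i : i \in A\} \cup \{-j : j \in [n] \sm B\}$, the face poset of the cubical subdivision of $\Sigma_n$ becomes the subposet $sB_n^+ \subseteq sB_n$ of signed sets containing at least one positive element, ordered by reverse inclusion. Adjoining $\hat 0$ and $\hat 1$ yields a bounded graded Cohen--Macaulay $\fS_n$-poset $\hat Q_n$ of rank $n+1$, to which Theorem~\ref{thm:main} applies, and interior faces of $K_n$ correspond precisely to chains in $sB_n^+$ ending at a maximal element. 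Following the pattern of the proofs of Propositions~\ref{prop:Ath2} and~\ref{prop:Ath1}, I would then identify $\ch(L_{V_n}(K_n), t)$ with (plus or minus) the Frobenius characteristic of the top homology of a suitable Rees product associated with $\hat Q_n$, evaluate it via Theorem~\ref{thm:main} and the uniformity identity~(\ref{eq:uniform}) applied to the $\fS_n$-uniform sequence $(sB_0^+, sB_1^+, \dots, sB_n^+)$, and finally simplify the resulting generating series to the right-hand side of (\ref{eq:localKn}). The main obstacle is precisely this identification --- a non-trivial analog, for $K_n$, of the classical link (underlying Proposition~\ref{prop:sdn}) between the local face module of the barycentric subdivision of $\Sigma_n$ and the top homology of $(B_n \sm \{\varnothing\}) \ast T_{t,n-1}$. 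A possible alternative, bypassing the Rees-product identification, is to enumerate interior faces of $K_n$ directly via chains in $sB_n^+$ with prescribed top element and match the equivariant generating function to (\ref{eq:localKn}) by explicit algebraic manipulation.

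For the Gal phenomenon, I set $\bx = \by$ in Equation~(\ref{eq:Ath2+}) and apply the involution $\omega$: since $\omega E(\bx; z) = H(\bx; z)$, the left-hand side of (\ref{eq:Ath2+}) with $\bx = \by$ becomes the right-hand side of (\ref{eq:localKn}), and combining with (\ref{eq:localKn}) gives
\[
  \ch(L_{V_n}(K_n), t) \ = \ \sum_{k=0}^{\lfloor n/2 \rfloor} \omega\,\xi^+_{n,k}(\bx, \bx) \, t^k (1+t)^{n-2k}.
\]
By Corollary~\ref{cor:Ath2}, $\xi^+_{n,k}(\bx, \by)$ is Schur-positive in $\bx$ and $\by$ separately; this positivity is preserved under the diagonal specialization $\bx = \by$ (by the Littlewood--Richardson rule, which expands $s_\lambda(\bx) s_\mu(\bx)$ as a nonnegative integer combination of Schur functions) and under $\omega$ (which permutes Schur functions). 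Hence each $M_k := \omega\,\xi^+_{n,k}(\bx, \bx)$ is the Frobenius characteristic of a non-virtual $\fS_n$-representation, completing the Gal decomposition.
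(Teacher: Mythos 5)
Your outer steps are sound and in fact coincide with the paper's: the reduction identity $\ch(\CC(K_n),t) = \sum_{k=0}^{n} h_{n-k}(\bx)\,\ch(L_{V_k}(K_k),t)$ is precisely the relation the paper imports from the proof of \cite[Proposition~4.20]{Sta92} (there written as multiplication by $E(\bx;-z)$ and used in the opposite direction, to deduce (\ref{eq:localKn}) from (\ref{eq:Kn})), and your last paragraph --- setting $\by = \bx$ in (\ref{eq:Ath2+}), applying $\omega$, and invoking Schur-positivity of $s_\lambda(\bx)s_\mu(\bx)$ --- is verbatim the paper's concluding step. The genuine gap is the core of the proposition: you never actually prove either (\ref{eq:Kn}) or (\ref{eq:localKn}), and you say so yourself (``the main obstacle is precisely this identification''). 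The paper does not go through any Rees-product model of $K_n$. It proves (\ref{eq:Kn}) directly, by Stembridge's character formula (\ref{eq:ste}) for proper actions, the observation that the fixed subcomplex $(K_n)^w$ is isomorphic to $K_{c(w)}$ with $c(w)$ the number of cycles of $w$, Savvidou's computation $h(K_n,t)=B^+_n(t)$ together with the closed form $B^+_n(t)/(1-t)^n = \sum_{k\ge 0}\left((2k+1)^n-(2k)^n\right)t^k$, and a generating-function manipulation as in \cite[Theorem~6.2]{Ste94}; none of this machinery appears in your proposal, and your sketched substitute is not carried out.

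Moreover, the sketched route has concrete failure points beyond the admitted one. First, Equation~(\ref{eq:uniform}) requires $G$-uniformity, and the sequence $(sB^+_0,\dots,sB^+_n)$ is not $\fS$-uniform: in $sB^+_n$ under reverse inclusion, the elements $x=\{+1,-2\}$ and $y=\{+1,+2\}$ have the same rank, yet $[x,\hat 1]$ is a three-element chain while $[y,\hat 1]$ is a diamond, and their $\fS_n$-stabilizers are non-isomorphic as well, since upper intervals and stabilizers depend on the number of positive entries of an element, not only on its rank; the same defect persists if one orders by inclusion instead. Second, Theorem~\ref{thm:main} outputs an answer in terms of the rank-selected representations $\beta_P(S)$, and these are not available for $sB^+_n$: Stanley's \cite[Theorem~6.4]{Sta82}, used in Corollary~\ref{cor:Ath2}, computes them for $sB_n$, not for this ``half'' poset, so a new computation would be needed. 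Third, your combinatorial dictionary is off: a face of $K_n$, i.e.\ a chain of cubical faces $[A_1,B_1]\subset\cdots\subset[A_k,B_k]$, lies in the interior of $\Sigma_n$ if and only if $B_k=[n]$, i.e.\ if and only if the inclusion-minimal signed set in the corresponding chain of $sB^+_n$ is all-positive --- not if and only if the chain reaches a maximal element of $sB^+_n$ (those encode only the intervals $[\{i\},[n]]$, e.g.\ the interior vertex at the barycenter of $\Sigma_n$ is missed). So the middle of your argument is not a matter of routine repair: as it stands, the proof of (\ref{eq:localKn}), and hence of the whole proposition, is missing.
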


The proof relies on methods developed by 
Stembridge~\cite{Ste94} to study representations of 
Weyl groups on the cohomology of the toric varieties
associated to Coxeter complexes. To prepare for it, 
we recall that the \emph{$h$-polynomial} of a
simplicial complex $\Delta$ of dimension $n-1$
is defined as 
\[ h(\Delta, t) \ = \ \sum_{i=0}^n f_{i-1} 
  (\Delta) \, t^i (1-t)^{n-i}, \]
where $f_i(\Delta)$ stands for the number of 
$i$-dimensional faces of $\Delta$. Consider  
a pair $(\Gamma, G)$, consisting of a 
triangulation $\Gamma$ of $\Sigma_n$ and a 
subgroup $G$ of $\fS_n$ acting on $\Gamma$, as 
discussed earlier. Following \cite[Section~1]
{Ste94}, we call the action of $G$ 
on $\Gamma$ \emph{proper} if $w$ fixes all 
vertices of every face $F \in \Delta$ which is 
fixed by $w$, for every $w \in G$. Note that 
group actions, such as the $\fS_n$-actions on 
$\Gamma_n$ and $K_n$, on the order complex 
(simplicial complex of chains) of a poset which 
are induced by an action on the poset itself, 
are always proper. Under this assumption, the 
set $\Gamma^w$ of faces of $\Gamma$ which are 
fixed by $w$ forms an induced subcomplex of 
$\Gamma$, for every $w \in G$.

Although Stembridge~\cite{Ste94} deals with 
triangulations of spheres, rather than simplices, 
his methods apply to our setting and his Theorem~1.4,
combined with the considerations of Section~6 in
\cite{Ste94}, imply that  
\begin{equation} \label{eq:ste}
  \ch \left( \CC(\Gamma), t \right) \ = \ 
  \frac{1}{n!} \sum_{w \in \fS_n} 
  \frac{h(\Gamma^w, t)}{(1 - t)^{1+\dim(\Gamma^w)}}
  \, \prod_{i \ge 1} \, (1 - t^{\lambda_i(w)}) \,
  p_{\lambda_i(w)} (\bx)
\end{equation}
for every proper $\fS_n$-action on $\Gamma$, where 
$\lambda_1(w) \ge \lambda_2(w) \ge \cdots$ are the 
sizes of the cycles of $w \in \fS_n$ and $p_k(\bx)$
is a power sum symmetric function.

\medskip
\noindent
\emph{Proof of Proposition~\ref{prop:Kn}}. To prove 
Equation~(\ref{eq:Kn}), we follow the analogous 
computation in \cite[Section~6]{Ste94} for the 
barycentric subdivision of the boundary complex of 
the simplex. We first note that
$(K_n)^w$ is combinatorially isomorphic to 
$K_{c(w)}$ for every $w \in \fS_n$, where $c(w)$ is 
the number of cycles of $w$. Furthermore, it was 
shown in~\cite[Section~3.6]{Sav13} that, in the 
notation of Section~\ref{sec:proof}, $h(K_n, t)$
is the `half $\bB_n$-Eulerian polynomial'
\begin{equation} 
  B^+_n(t) \ = \ \sum_{w \in \bB^+_n} 
   t^{|\Des_B(w)|}, 
\end{equation} 
where $\bB^+_n$ consists of the signed permutations
$w \in \bB_n$ with \emph{negative} first coordinate.
These remarks and Equation~(\ref{eq:ste}) imply 
that 
\[ \ch \left( \CC(K_n), t \right) z^n \ = 
   \sum_{\lambda = (\lambda_1, \lambda_2,\dots) 
   \vdash n} m^{-1}_\lambda \, 
   \frac{B^+_{\ell(\lambda)}(t)}
   {(1 - t)^{\ell(\lambda)}} \, \prod_{i \ge 1} \, 
   (1 - t^{\lambda_i}) \, p_{\lambda_i} (\bx)
   z^{\lambda_i}, \]
where $n!/m_\lambda$ is the cardinality of the 
conjugacy class of $\fS_n$ which corresponds to 
$\lambda \vdash n$ and $\ell(\lambda)$ is the number 
of parts of $\lambda$. The polynomials $B^+_n(t)$
are known (see, for instance, 
\cite[Equation~3.7.5]{Sav13}) to satisfy 
\begin{equation} 
   \frac{B^+_n(t)} {(1 - t)^n} \ = \ \sum_{k \ge 0}
   \left( (2k+1)^n - (2k)^n \right) t^k 
\end{equation} 
and hence, we may rewrite the previous formula as 
\[ \ch \left( \CC(K_n), t \right) z^n \ = 
   \sum_{k \ge 0} \, t^k 
   \sum_{\lambda = (\lambda_1, \lambda_2,\dots) 
   \vdash n} m^{-1}_\lambda 
   \left( (2k+1)^{\ell(\lambda)} - 
   (2k)^{\ell(\lambda)} \right) \prod_{i \ge 1} \, 
   (1 - t^{\lambda_i}) \, p_{\lambda_i} (\bx)
   z^{\lambda_i} . \]
Summing over all $n \ge 1$ and using the standard 
identities
\[ H(\bx; z) \ = \ \sum_\lambda m^{-1}_\lambda 
   p_\lambda (\bx) z^{|\lambda|} \ = \ \exp \left( 
   \, \sum_{n \ge 1} p_n(\bx) z^n / n \right) \]
just as in the proof of \cite[Theorem~6.2]{Ste94}
(one considers the $p_n$ as algebraically 
independent indeterminates and replaces first each
$p_n$ with $(2k+1) (1 - t^n)p_n$, then with $(2k) 
(1 - t^n)p_n$), we conclude that 
\begin{eqnarray*}
  1 \, + \, \sum_{n \ge 1} \, \ch 
  \left( \CC(K_n), t \right) z^n & = & 1 \, + \, 
  \sum_{k \ge 0} \, t^k 
  \left( \frac{H(\bx; z)^{2k+1}}{H(\bx; tz)^{2k+1}} 
  - \frac{H(\bx; z)^{2k}}{H(\bx; tz)^{2k}} \right) 
  \\ & & \\
  & = & 1 \, + \, \left( \frac{H(\bx; z)}{H(\bx; tz)} 
  - 1 \right) \left( 1 - t \, \frac{H(\bx; z)^2}
  {H(\bx; tz)^2} \right)^{-1} \\ & & \\
  & = & \frac{H(\bx; z)
  \left( H(\bx; tz) - tH(\bx; z) \right)} 
        {H(\bx; tz)^2 - tH(\bx; z)^2}
\end{eqnarray*}

\medskip
\noindent
and the proof of~(\ref{eq:Kn}) follows. To prove
(\ref{eq:localKn}), it suffices to observe that
\[ 1 \, + \, \sum_{n \ge 1} \, \ch \left( 
   L_{V_n}(K_n), t \right) z^n \ = \ E(\bx, -z) 
   \left( 1 \, + \, \sum_{n \ge 1} \, \ch \left( 
   \CC(K_n), t \right) z^n \right). \]
The latter follows exactly as the corresponding 
identity for the barycentric subdivision 
$\Gamma_n$, shown in the proof of 
\cite[Proposition~4.20]{Sta92}. Finally, from
Equations~(\ref{eq:Ath2+}) and~(\ref{eq:localKn})
we deduce that
\[ \ch \left( L_{V_n}(K_n), t \right) \ = \ 
   \sum_{k=0}^{\lfloor n/2 \rfloor} \omega \, 
   \xi^+_{n,k} (\bx, \bx) \, t^k (1 + t)^{n-2k}. \]
This expression, Corollary~\ref{cor:Ath2} and the 
well known fact that $s_\lambda(\bx) s_\mu(\bx)$ 
is Schur-positive for all partitions $\lambda, \mu$
imply that $\ch \left( L_{V_n}(K_n), t \right)$ is 
Schur $\gamma$-positive for every $n$, as claimed
in the last statement of the proposition.
\qed

\section{An instance of the equivariant Gal 
phenomenon} \label{sec:toric}

A very interesting group action on a simplicial 
complex is that of a finite Coxeter group $W$ on 
its Coxeter complex~\cite{Bj84}. When $W$ 
is crystallographic, this action induces a graded 
$W$-representation on the (even degree) cohomology of 
the associated projective toric variety which has been 
studied by Procesi~\cite{Pro90}, 
Stanley~\cite[p.~529]{Sta89}, Dolgachev and 
Lunts~\cite{DL94}, Stembridge~\cite{Ste94} and
Lehrer~\cite{Le08}, among others. The graded dimension
of this representation is equal to the $W$-Eulerian 
polynomial. Its equivariant $\gamma$-positivity is
a consequence of a variant of Equation~(\ref{eq:Ge1})
in the case of the symmetric group $\fS_n$; see 
\cite[Section~5]{Ath17} \cite[Section~5]{SW17}. In the 
case of the hyperoctahedral group $\bB_n$, by 
\cite[Theorem~6.3]{DL94} or \cite[Theorem~7.6]{Ste94}, 
the Frobenius characteristic of this graded 
$\bB_n$-representation is equal to the coefficient of 
$z^n$ in 
\begin{equation} \label{eq:genrefEulerB}
  \frac{(1-t) H(\bx; z) H(\bx; tz)} 
  {H(\bx, \by; tz) - tH(\bx, \by; z)} . 
\end{equation}

The following statement (and its proof) shows that the 
equivariant $\gamma$-positivity of this graded 
representation is a consequence of the results of 
Section~\ref{sec:app} and confirms another instance 
of the equivariant Gal phenomenon of Shareshian and 
Wachs. As discussed in \cite[Section~5]{Ath17}, it is 
reasonable to expect that the same holds for the 
action of any finite crystallographic Coxeter group 
$W$ on its Coxeter complex; that would provide a 
natural equivariant analogue to the $\gamma$-positivity 
of $W$-Eulerian polynomials \cite[Section~2.1.3]{Ath17}. 

\begin{proposition}
The coefficient of $z^n$ in (\ref{eq:genrefEulerB}) 
is Schur $\gamma$-positive for every $n \in \NN$.
\end{proposition}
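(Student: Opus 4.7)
The plan is to derive the generating function (\ref{eq:genrefEulerB}) as the image under the involution $\omega_\bx \omega_\by$ of a product of two Schur $\gamma$-positive generating functions already established in Section~\ref{sec:app}: the left-hand side of (\ref{eq:Ath1+}) and the left-hand side of (\ref{eq:Ge2}) with $\bx$ replaced by $\by$. Multiplying these and observing that the factor $E(\by;tz) - tE(\by;z)$ in the numerator of (\ref{eq:Ath1+}) cancels the denominator of the $\by$-version of (\ref{eq:Ge2}), one finds
\[
\frac{E(\bx;z)E(\bx;tz)\bigl(E(\by;tz)-tE(\by;z)\bigr)}{E(\bx;tz)E(\by;tz)-tE(\bx;z)E(\by;z)} \cdot \frac{1-t}{E(\by;tz)-tE(\by;z)} = \frac{(1-t)\,E(\bx;z)\,E(\bx;tz)}{E(\bx,\by;tz)-tE(\bx,\by;z)},
\]
using $E(\bx,\by;z)=E(\bx;z)E(\by;z)$. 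Applying $\omega_\bx \omega_\by$ to both sides then produces precisely the expression~(\ref{eq:genrefEulerB}).

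Next, I would invoke Corollary~\ref{cor:Ath1} to expand the coefficient of $z^n$ on the left-hand side of (\ref{eq:Ath1+}) in the form $\sum_k \gamma^+_{n,k}(\bx,\by)\, t^k(1+t)^{n-2k}$, with each $\gamma^+_{n,k}$ a nonnegative combination of products $s_\lambda(\bx) s_\mu(\by)$, and Corollary~\ref{cor:Gessel} (applied in the $\by$ alphabet) to expand the coefficient of $z^m$ in $\frac{1-t}{E(\by;tz)-tE(\by;z)}$ as a Schur $\gamma$-positive polynomial in $t$ with center $m/2$ (with the lowest terms $m=0,1$ contributing $1$ and $0$, respectively). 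Taking the Cauchy product of these two expansions, the identity $t^a(1+t)^b \cdot t^c(1+t)^d = t^{a+c}(1+t)^{b+d}$ keeps the result inside the $\gamma$-basis and correctly adds centers of symmetry ($n/2+m/2=N/2$ when $n+m=N$), while the Littlewood--Richardson rule guarantees that the resulting coefficients remain nonnegative integer combinations of $s_\lambda(\bx)s_\mu(\by)$. Hence the coefficient of $z^N$ in the product displayed above is Schur $\gamma$-positive with center $N/2$.

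Finally, the involution $\omega_\bx\omega_\by$ sends $s_\lambda(\bx)s_\mu(\by)$ to $s_{\lambda'}(\bx)s_{\mu'}(\by)$ and leaves the $\gamma$-shape in $t$ untouched, so it preserves Schur $\gamma$-positivity. Transferring this along the identity of the first paragraph yields the claim for (\ref{eq:genrefEulerB}). There is no genuinely hard step; the argument is a clean specialization of the machinery already built in Section~\ref{sec:app}, and it extends the Shareshian--Wachs argument for the stellohedron (recovered by setting $\by=0$ in (\ref{eq:Ath1+}), as noted in the introduction) to the hyperoctahedral setting. The only care required is in verifying the cancellation and in handling the corner cases $m=0,1$ of the $\by$-version of (\ref{eq:Ge2}).
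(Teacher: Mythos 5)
Your proposal is correct and takes essentially the same route as the paper's own proof: the paper likewise writes the $E$-analogue of (\ref{eq:genrefEulerB}) as the product of the left-hand side of (\ref{eq:Ath1+}) with the $\by$-version of Gessel's identity (\ref{eq:Ge2}), using the cancellation of the factor $E(\by;tz)-tE(\by;z)$, then takes the Cauchy product of the two $\gamma$-expansions from Corollaries~\ref{cor:Ath1} and~\ref{cor:Gessel} (handling your corner cases via the conventions $\xi_{0,-1}(\by):=1$ and $\xi_{1,-1}(\by):=0$) and concludes by noting that Schur-positivity is preserved under sums, products and the involution $\omega$.
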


\begin{proof}
Using Equations~~(\ref{eq:Ge2}) with~(\ref{eq:Ath1+})
we find that 

\begin{eqnarray*} 
\frac{(1-t) E(\bx; z) E(\bx; tz)} 
	{E(\bx, \by; tz) - tE(\bx, \by; tz)}
& = & \frac{E(\bx; z) E(\bx; tz)(E(\by; tz) - tE(\by; z))} 
	{E(\bx; tz)E(\by; tz) - tE(\bx; z)E(\by; tz)} \cdot 
\frac{1-t} 
	{E(\by; tz) - tE(\by; z)} \\ & & \\
& = & \left( \, \sum_{n \ge 0} z^n 
\sum_{i=0}^{\lfloor n/2 \rfloor} \gamma^{+}_{n,i}
(\bx, \by) \, t^i (1 + t)^{n-2i} \right) \cdot \\ & & \\
&   & \left( \, 1 \, + \, \sum_{n \ge 2} z^n 
\sum_{j=1}^{\lfloor n/2 \rfloor} 
\xi_{n,j-1}(\by) \, t^j (1 + t)^{n-2j} \right) \\ & & 
\\ & = & \sum_{n \ge 0} z^n \! \sum_{k+\ell=n} 
\sum_{i, j} \, \gamma^{+}_{k,i} (\bx, \by) 
\, \xi_{\ell,j-1}(\by) \,  t^{i+j} (1+t)^{n-2i-2j} ,
\end{eqnarray*}

\medskip
\noindent
where we have set $\xi_{0, -1}(\by) := 1$ and $\xi_{1, -1}
(\by) := 0$. Since Schur-positivity is preserved 
by sums, products and the standard involution on symmetric 
functions, this computation implies that the coefficient of 
$z^n$ in (\ref{eq:genrefEulerB}) is Schur $\gamma$-positive 
for every $n \in \NN$ and the proof follows.
\end{proof}

\begin{remark} \rm
We have shown that
\begin{equation} \label{eq:gammaB(x,y)}
\frac{(1-t) H(\bx; z) H(\bx; tz)} 
{H(\bx, \by; tz) - tH(\bx, \by; z)} \ = \ 1 \ + \ 
\sum_{n \ge 1} z^n \sum_{i=0}^{\lfloor n/2 \rfloor} 
\gamma^B_{n,i}(\bx,\by) \, t^i (1 + t)^{n-2i}
\end{equation}
for some Schur-positive symmetric functions $\gamma^B_{n,i}
(\bx, \by)$. It is an open problem to find an explicit
combinatorial interpretation of the coefficient
$c^B_{(\lambda, \mu), i}$ of $s_\lambda(\bx) s_\mu(\by)$ in 
$\gamma^B_{n,i}(\bx, \by)$, for $(\lambda, \mu) \vdash n$.
Comparing the graded dimensions of the 
$\bB_n$-representations whose Frobenius characteristic is
given by the two sides of Equation~(\ref{eq:gammaB(x,y)}) we 
get
\[ B_n(t) \ = \ \sum_{(\lambda,\mu) \vdash n} 
   {n \choose |\lambda|} f^\lambda f^\mu 
	 \sum_{i=0}^{\lfloor n/2 \rfloor} c^B_{(\lambda,\mu), i} \, 
	 t^i (1 + t)^{n-2i} , \]
where $B_n(t) := \sum_{w \in \bB_n} t^{|\Des_B(w)|}$ is the 
$\bB_n$-Eulerian polynomial and $f^\lambda$ stands for the 
number of standard Young tableaux of shape $\lambda$. Thus, 
a solution to this problem would provide a refinement of the 
known $\gamma$-positivity of $\bB_n$-Eulerian polynomials; 
see \cite[Section~2.1.3]{Ath17}.
\end{remark}

\medskip
\noindent \textbf{Acknowledgements}. 
The author wishes to thank Eric Katz and Kalle Karu
for some helpful e-discussions about the local 
equivariant Gal phenomenon and the anonymous referees
for their useful comments.

\end{document}